\numberwithin{equation}{section}
\newcommand{\R}{\mathbb{R}}
\newcommand{\si}{\sigma}
\newcommand{\Om}{\Omega}
\newcommand{\ol}{\overline}
\newcommand{\no}{\nonumber}
\newcommand{\vep}{\varepsilon}
\newcommand{\intom}{\int_{\Omega}}
\newcommand{\avint}{\rlap{$-$}\!\int_{\Omega}}
\newcommand{\avintr}{\rlap{$-$}\!\int_{B_1(0)}}
\newcommand{\prt}{\partial}
\newcommand{\la}{\lambda}
\newcommand{\dx}{\mathrm{d}x}
\newcommand{\de}{\delta}
\newtheorem{theorem}{Theorem}[section]
\newtheorem{lemma}{Lemma}[section]
\newtheorem{proposition}{Proposition}[section]
\newtheorem{corollary}{Corollary}[section]
\newtheorem{rem}{Remark}[section]
\newtheorem{definition}{Definition}[section]
\def\be{\begin{equation}}
\def\ee{\end{equation}}
\def\bge{\begin{eqnarray}}
\def\bgee{\begin{eqnarray*}}
\def\ege{\end{eqnarray}}
\def\egee{\end{eqnarray*}}
\newcommand{\bea}{\begin{eqnarray}}
\newcommand{\eea}{\end{eqnarray}}
\newcommand{\bean}{\begin{eqnarray*}}
\newcommand{\eean}{\end{eqnarray*}}
\begin{document}

\title{Diffusion-driven blow-up for a non-local Fisher-KPP type
model}

\author{Nikos I. Kavallaris}
\address{
 Department of Mathematical and Physical Sciences, University of Chester, Thornton Science Park
Pool Lane, Ince, Chester  CH2 4NU, UK
}

\email{n.kavallaris@chester.ac.uk}

\author{Evangelos A. Latos}

\address{Institute for Mathematics and Scientific Computing, Karl-Franzens-University Graz, Heinr. 36, A-8010 Graz, Austria}

\email{evangelos.latos@uni-graz.at}

\subjclass{Primary: 35B44, 35K51 ; Secondary: 35B36, 92Bxx }

\keywords{Pattern formation, Turing instability, diffusion-driven blow-up, non-local, reaction-diffusion}

\date{\today}
\maketitle
\begin{abstract}
The purpose  of the current paper is to unveil the key mechanism which is responsible for the occurrence of {\it Turing-type instability} for a non-local Fisher-KPP model. In particular, we prove that the solution of the considered non-local Fisher-KPP equation in the neighbourhood of a constant stationary solution, is destabilized  via a {\it diffusion-driven blow-up}. It is also shown that the observed {\it diffusion-driven blow-up} is complete, whilst its blow-up rate is completely classified. Finally, the detected {\it diffusion-driven  instability} results in the formation of unstable blow-up patterns, which are also identified through the determination of the blow-up profile of the solution.



\end{abstract}

\section{Introduction}
\subsection*{The mathematical model}
In as early as 1952, A. Turing in his seminal paper \cite{t52} attempted, by using reaction-diffusion
systems, to model the phenomenon of morphogenesis, the regeneration of tissue structures
in hydra, an animal of a few millimeters in length made up of approximately $100,000$ cells.
Further observations on the morphogenesis in hydra led to the assumption of the existence
of two chemical substances (morphogens), a slowly diffusing (short-range) activator and a
rapidly diffusing (long-range) inhibitor. A. Turing, in \cite{t52}, indicates that although diffusion
has a smoothing and trivializing effect on a single chemical, for the case of the interaction of
two or more chemicals different diffusion rates could force the uniform steady states of the
corresponding reaction–diffusion systems to become unstable and to lead to non-homogeneous
distributions of such reactants. Since then, such a phenomenon is now known as {\it Turing-type instability} or {\it diffusion-driven instability (DDI)};  however such a phenomenon has been first specified in \cite{R40}.

The main purpose of the current paper is the investigation  of the occurrence of  a {\it Turing-type} or {\it DDI instability} for the following non-local Fisher-KPP type model
\begin{align}\label{fkpp}
 &\ u_t-\Delta u=|u|^{p-1}u\left(1-\sigma\avint |u|^{\beta-1}u\;dx\right), &&\hspace{-5em}x\in\Om,\; t>0,\\
  &\ \frac{\partial u}{\partial \nu}=0, &&\hspace{-5em} x\in\prt\Om,\;t>0\label{nbc},
  \\
 &\ u(x,0)=u_0(x)\geq 0, &&\hspace{-5em}x\in\Om.\label{id}
\end{align}
Our motivation to investigate the possible {\it Turing-type} instability of the above model stems from the fact that the non-local Fisher-KPP equation \eqref{fkpp} arises as a mathematical model in several research areas. In particular, \eqref{fkpp} characterizes the evolution of a population of density $u$ when its individuals are moving either by diffusion and/or by  interaction. Actually, the fate  of the population is determined by the interaction modus which might lead either to growth or decay.The reaction term describes the joint influence of a nonlinear growth accounting for a weak Allee effect and of concurrence for available resources (prevention of overcrowding). The nonlocal form of the reaction term infers that several individuals of the population interact in a space/phenotypic trait/etc. domain, through sampling all occupancy information therein. This kind of  problems arise e.g., when modeling emergence and evolution of a biological species cf. \cite{BH94, B00, D04, FG89, LL97, vol2}. Thereby the respective population is structured by a phenotypical trait and its individuals infer two essential interactions: mutation and selection. From this perspective $u(x, t)>0$  serves as the density of a population having phenotype $x$ at time $t.$ The mutation process, is described by a diffusion operator on the trait space,  and it is modeled by a classical diffusion operator, whereas the selection process is illustrated by the nonlocal term $u^p\left(1-\sigma\avint u^{\beta}\;dx\right),$  where $\sigma>0$ stands for the (non-local) parameter measuring the intensity of the selection process.
 Equation \eqref{fkpp} has been  also proposed, cf. \cite{GP07,GVA06}, as a simple model of adaptive dynamics, where again the variable $x$ represents a
phenotypical trait of a given population. The individuals of such a population with trait $x$ face competition
from all their counterparts which does not depend on the trait itself. Other types of non-local terms may arise, see \cite{CD05,PS05} for dispersal by jumps rather than by the Brownian motion. Note also that the imposed Neumann type boundary condition \eqref{nbc} describes the fact that the population does not interact with its external environment.
Besides,  $\Om$ in \eqref{fkpp}  is assumed to be a bounded domain in $\mathbb{R}^N$, $N\geq3$, with boundary of class $C^{2,r}$ for some $r\in(0,1).$ We also consider $u_0\in L^\beta(\Om)\setminus\{0\}$ and the involved exponents $p, \beta$  are set to satisfy
\begin{equation}\label{mt1}
 p\geq\beta>1.
\end{equation}
Equation \eqref{fkpp} is actually a non-local version of the well known Fisher-KPP equation was first introduced, in its scalar form,
\bge\label{lfkpp}
\frac{\partial u}{\partial t} = \frac{\partial^2u}{\partial x^2} + u^p(1-u).
\ege
by Fisher \cite{f37} and Kolmogorov, Petrovskii, Piskunov \cite{kpp}, both in 1937, in the context of population dynamics. Here $u$ represents the population density and the reaction term in \eqref{lfkpp} is considered to be the reproduction rate of the population. When $p = 1$, this reproduction rate is proportional to the population density $u$ and to the available resources $(1-u).$ While, when $p = 2$ the model actually takes into account the addition of sexual reproduction with the reproduction rate to be  proportional to the square of the population density, see \cite{VVpp, volpet, vol1, vol2}. Later, in 1938, Zeldovich and Frank-Kamenetskii \cite{zfk38} came up with equation \eqref{lfkpp} in combustion theory where now $u$ stands for the temperature of the combustive mixture.



In the literature, far more cases of non-local problems are encountered  where the non-local terms induced by an integral of the solution over the domain of interaction $\Omega,$ cf. \cite{AlKH11, KN07, KTz, KLW17,  KS18, L1, L2, LTz1, LTz2, QS, s98, Tz02} and the references there in; however a non-local reaction term close to the one of \eqref{fkpp} is particularly considered in \cite{bds93, hy95,SJM07,SK08}.  Notably, in  \cite{bds93} the authors considered a non-local parabolic reaction-diffusion of the form 
\bge\label{gnlt}
u_t=\Delta u+u^p-\frac{1}{|\Omega|}\int_\Omega u^p\;dx.
\ege
For the case $p=2$ and for $\Omega=(0,1)$ they proved the finite time blow-up of the solutions by considering appropriate initial data. Equation \eqref{gnlt} for general exponent $p$ was also considered in \cite{hy95} and the authors, among others, proved that the solution can blow-up if $p>N/(N-2)$ by considering  spiky initial data. Later on, the authors in \cite{SJM07,SK08} proved the occurrence of finite-time blow-up for \eqref{gnlt} even for $p>1$ and initial data satisfying an energy inequality, utilizing a gamma convergence argument in order to get appropriate lower bounds for the considered Lyapunov functional. Thanks to the negative sign of the non-local reaction term included in \eqref{fkpp} and  \eqref{gnlt} maximum principle fails and thus comparison methods are not applicable, cf. \cite[Proposition 52.24]{QS}. Furthermore, reaction-diffusion equation \eqref{gnlt} leads to a conservation of the total mass, which is a key property for the investigation of its dynamics; it also admits a Lyapunov functional a helpful tool for the derivation of a priori estimates of the solution. In contrast, equation \eqref{fkpp} lacks these two key features although the associated total mass is still  bounded, a crucial property still used for the investigation of its dynamics.

Now regarding the non-local reaction-diffusion equation \eqref{fkpp} there are some already available results in the literature.
More precisely,  the authors in \cite{BChL} proved that the problem \eqref{fkpp}-\eqref{id} for $\beta=1$ admits global-in-time solutions for
$N=1,2$ with any $1 \le p <2$ or $N \ge 3$ when $1 \le
p < 1+2/N.$  Moreover, in \cite{BChL} the asymptotic convergence of solutions towards the
solution of the heat equation is also proved. Some more existence results were shown for the whole space case, i.e. when $\Omega=\R^N$ as well as for different boundary conditions in \cite{B,BCh}; we refer the interested reader to these works for more references about this kind of problems. Finally, in \cite{LLC} the authors considered \eqref{fkpp} on $\mathbb{R}$ and studied the wave fronts of the corresponding nonlinear non-local bistable reaction-diffusion equation. Finally, quite recently,  in \cite{LCS20},  the whole space case with reaction term $u^p\left(1-\sigma J*u^{\beta}\right)$ for a proper kernel $J$  is  investigated. Nevertheless, as far as we know there are no blow-up results available  in the literature for the non-local equation \eqref{fkpp}, and so in the current paper we will try to fill in this gap by providing some blow-up results for the Neumann problem \eqref{fkpp}-\eqref{id}.


\subsection*{Main results}
In the current subsection the main results of our work  related with the occurrence of a a {\it Turing-type} instability for model are demonstrated. First, it is worth noting, that due to the power non-linearity and thanks to condition \eqref{mt1},  if a {\it Turing-type} (or {\it  (DDI)) instability} occurs for the solution of non-local problem \eqref{fkpp}-\eqref{id}, then it should lead to the non existence of global-in-time solutions. More precisely, such an instability would be exhibited in the form of a {\it diffusion-driven blow-up (DDBU)}, cf. \cite{FN05, hy95}.

In this work we restrict ourselves to the radial symmetric case, i.e. when $\Omega = B_1$ where
\[ B_1=B_1(0):=\{x\in \R^N: |x|<1\},\]
denotes the unit sphere in $\R^N.$ Then the solution of \eqref{pfkpp}-\eqref{pid} is radial symmetric, cf. \cite{gnn}, that is $u(x, t) = u(r, t)$ for $0\leq r=|x|<1$ and so problem \eqref{pfkpp}-\eqref{pid} is reduced to
\bea
 && u_t- \Delta_r u
 =
 K(t)u^p,\quad 0<r<1,\; 0<t<T,\label{fkpp2a}\\
  &&  u_r(0,t)=u_r(1,t)=0,\quad 0<t<T,\label{fkpp2b}
  \\
 &&\ u(r,0)=u_0(r)\geq 0, \quad 0<r<1,\label{id2}
\eea
where  $T > 0$ is the maximal existence time, $\Delta_r:=\frac{\partial^2}{\partial r^2}+\frac{N-1}{r} \frac{\partial }{\partial r}$ and
\bean
K(t)\equiv
1
-\sigma \avintr u^\beta\;dx.
\eean
Notably the absolute values have been dropped, since the solution of problem \eqref{fkpp2a}-\eqref{id2} is nonegative when nongeative initial data are considered, cf.  Lemma \ref{pos}.

Next we consider, as in \cite{hy95, KS16, LN09}, spiky initial data of the form
\be\label{sid}
u_0(r)=\lambda\phi_\delta(r),\quad\mbox{for}\quad 0<\lambda<<1,
\ee
where
\begin{align}\label{fd}
\phi_\delta(r)=
\begin{cases}
	r^{-a},\quad  &\delta\leq r\leq1,\\
	\delta^{-a}\left(1+\frac{a}{2}\right)-\frac{a}{2}\delta^{-(a+2)}r^2,\quad  &0\leq r<\delta,
\end{cases}
\end{align}
for $ a :=\frac{2}{p-1}$ and $\delta\in(0,1).$  Taking also into account that  $u_0'(r)<0$, then $\max_{r\in[0,1]}u=u_0(0),$ and via maximum principle for the heat operator, since $K(t)>0$ due to Lemma \ref{Lemma:mbetaestimate} $(i)$,  we also deduce that $u_r(r,t)<0,$ hence $||u(\cdot,t)||_{\infty}=u(0,t).$

Henceforth, we will denote by $T_{\de}$  the maximum existence time of solution of \eqref{fkpp2a}-\eqref{id2} with initial data given by \eqref{fd} and \eqref{sid}.
In the sequel we  prove that this kind of  initial  can lead to  finite-time blow-up for the solution of problem \eqref{fkpp2a}-\eqref{id2}, i.e. to the occurrence  of $T_{\de}<+\infty$
 such that
\be\label{sbu}
||u(\cdot,t)||_{\infty}=u(0,t)\to +\infty \quad\mbox{as}\quad t\to T_{\de}.
\ee
Our first main result  is stated as follows:
\begin{theorem}\label{thmbu}
Let $\Omega=B_1\subset \R^N$ with $N\geq 3,$  $p>\frac{N}{N-2}$ and \eqref{mt1} hold. Then there is a $\lambda_0>0$ provided with the following property: any $0<\lambda\leq \lambda_0$ admits $0<\delta_0=\delta_0(\lambda)<1$ such that any solution of problem \eqref{fkpp2a}-\eqref{id2} with initial data of the form \eqref{sid}-\eqref{fd} satisfying Lemma \ref{Lemma:mbetaestimate} $(i)$ and $0<\delta\leq \delta_0$ blows up in finite time, i.e. $T_{\de}<+\infty.$
\end{theorem}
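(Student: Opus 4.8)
The plan is to squeeze the nonlocal solution from below by the flow of a purely local superlinear heat equation, on the time interval where the kernel $K(t)$ stays bounded away from zero, and to exploit the concentration of the spiky datum so that the $L^\infty$-norm blows up before the nonlocal damping can switch off.

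\textbf{Step 1 (a positive floor for $K$).} By Lemma~\ref{pos} the solution is nonnegative, so $\|u(\cdot,t)\|_\infty=u(0,t)$, and Lemma~\ref{Lemma:mbetaestimate}$(i)$ yields $K(t)>0$ on $[0,T_\de)$. I would first render this quantitative. The initial nonlocal mass equals $\sigma\lambda^\beta\avintr\phi_\delta^\beta\,dx$, and the hypothesis $p>\tfrac{N}{N-2}$ forces $a=\tfrac{2}{p-1}<N-2$, whence $a\beta\le ap=a+2<N$; therefore $\phi_\delta\in L^\beta(B_1)$ with norm bounded \emph{uniformly} in $\delta\in(0,1)$. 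Choosing $\lambda_0$ small thus makes the initial mass as small as desired, so that $K(0)\ge\kappa_0>0$ for all $\lambda\le\lambda_0$ and all $\delta$. The real work, deferred to Step~4, is to propagate $K(t)\ge\kappa_0$ up to the blow-up time.

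\textbf{Step 2 (reduction to a local problem).} On any subinterval where $K(t)\ge\kappa_0$, equation~\eqref{fkpp2a} shows that $u$ is a supersolution of the local Neumann problem $v_t-\Delta_r v=\kappa_0 v^p$ on $B_1$ with the same datum $u_0$. Since the comparison principle \emph{is} available for this local equation—the failure noted in \cite[Proposition~52.24]{QS} is caused solely by the nonlocal factor—we obtain $u\ge v\ge0$, and in particular $u(0,t)\ge v(0,t)$.

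\textbf{Step 3 (finite-time blow-up of the local flow).} Testing $v_t-\Delta_r v=\kappa_0 v^p$ against the constant first Neumann eigenfunction and applying Jensen's inequality gives \[\frac{d}{dt}\int_{B_1} v\,dx\ \ge\ \kappa_0\,|B_1|^{1-p}\Big(\int_{B_1} v\,dx\Big)^{p},\] so $\int_{B_1}v$, and hence $v$, blows up at a finite time $T_{\mathrm{loc}}\le\big(\int_{B_1}u_0\big)^{1-p}|B_1|^{p-1}/[(p-1)\kappa_0]$; as $v$ is radially decreasing this is an $L^\infty=v(0,\cdot)$ blow-up, so $u(0,t)\ge v(0,t)\to+\infty$. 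For a sharper, concentration-driven estimate I would instead exploit the exact self-similarity $\phi_\delta(r)=\delta^{-a}\Phi(r/\delta)$, where $\Phi(\rho)=\rho^{-a}$ for $\rho\ge1$ and $\Phi(\rho)=(1+\tfrac a2)-\tfrac a2\rho^2$ for $\rho<1$, together with the scaling invariance $ap=a+2$ of the nonlinearity; the natural comparison object is then the singular stationary profile $L_\ast r^{-a}$, which exists precisely because $a<N-2$, and the associated subsolution produces a genuine point blow-up at the origin.

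\textbf{Step 4 (closing the loop — the main obstacle).} It remains to guarantee $K(t)\ge\kappa_0$ on all of $[0,T_\de)$, and this is the crux. The very mechanism of Step~3 that forces blow-up also inflates $\int_{B_1} v\le\int_{B_1}u$, hence $\int_{B_1}u^\beta$, thereby \emph{lowering} $K$; so a priori $K$ could reach zero before $u$ blows up, invalidating the comparison. I would resolve this by a continuity/bootstrap argument on $T_1:=\sup\{t<T_\de:\ K(s)\ge\kappa_0\ \forall s\le t\}$: on $[0,T_1]$ the local comparison is licit, while the a priori estimate of Lemma~\ref{Lemma:mbetaestimate} keeps $\int_{B_1}u^\beta$ below the destabilizing threshold $1/\sigma$, forcing $T_1=T_\de$. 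The point is that for $\delta$ small the blow-up is concentrated at the origin—the exponent window $p>\tfrac{N}{N-2}$ making the singular profile $r^{-a}$ lie in $L^\beta$—so that $u(0,t)$ diverges while $\int_{B_1}u^\beta$ stays controlled; choosing first $\lambda\le\lambda_0$ and then $\delta_0(\lambda)$ small makes the $L^\infty$ blow-up precede any mass inflation, yielding $T_\de\le T_{\mathrm{loc}}<+\infty$. The hardest part is exactly this decoupling: showing that the concentration of the spike keeps the nonlocal kernel uniformly positive right up to the blow-up instant.
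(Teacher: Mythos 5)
Your overall skeleton --- floor the kernel $K(t)$ from below, compare with a local superlinear Neumann problem, and blow that up --- is exactly the paper's strategy, and your Step~4 correctly identifies the crux. But the two load-bearing steps are not supplied, and the versions you sketch would not close.

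First, the uniform floor $K(t)\ge\kappa_0$ on a $\delta$-independent time interval does \emph{not} follow from Lemma~\ref{Lemma:mbetaestimate}$(i)$: that lemma only gives $m_\beta(t)<1/\sigma$, i.e.\ $K(t)>0$, and nothing prevents $m_\beta(t)\uparrow 1/\sigma$ so that $K(t)\downarrow 0$. Your bootstrap on $T_1$ is therefore circular --- to keep $K\ge\kappa_0$ you need $\int u^\beta$ bounded strictly away from $1/\sigma$, which is precisely what is unproven. The paper fills this hole with Proposition~\ref{prop8.1}, a two-sided estimate $\frac12 A_2\overline{u}^\mu\le\avintr u^p\le 2A_1\overline{u}^\mu$ valid up to a time $t_0$ independent of $\delta$, proved by a maximum-principle argument for the auxiliary function $\chi=r^{N-1}u_r+\varepsilon r^N u^k/\overline{u}^{\ell}$ plus a continuation argument; combined with Jensen ($\beta\le p$) and the boundedness of $\overline{u}$ this yields $K(t)\ge D=1-\sigma 2^{\beta(\mu+1)/p}a_1^{\beta/p}\Lambda_1^{\beta\mu/p}\lambda^{\beta}>0$ for $0<t<\min\{t_0,T_\delta\}$. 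Some substitute for this quantitative control of $\avintr u^\beta$ is indispensable.

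Second, even granting $K\ge\kappa_0$ on a fixed window $(0,t_0)$, your Kaplan/Jensen argument in Step~3 gives a blow-up time of order $\bigl(\int u_0\bigr)^{1-p}\sim\lambda^{1-p}$, which is \emph{independent of $\delta$ and large for small $\lambda$}; it cannot be forced inside the window $(0,t_0)$, so the comparison expires before the blow-up occurs. The paper avoids this by a pointwise Friedman--McLeod device: the initial datum is engineered (Lemma~\ref{PhiDeltaProp}$(iii)$, condition \eqref{supercritical}) so that $h:=\tilde u_t-\tilde u^p$ satisfies $h(\cdot,0)\ge u_0^p>0$ and a parabolic inequality, whence $\tilde u_t>\tilde u^p$ and, integrating at $r=0$,
\begin{equation*}
\tilde u(0,t)\ \ge\ \Bigl(u_0^{1-p}(0)-(p-1)t\Bigr)^{-\frac{1}{p-1}},\qquad u_0(0)=\lambda\bigl(1+\tfrac a2\bigr)\delta^{-a},
\end{equation*}
giving blow-up by time $\widetilde T_\delta=\frac{1}{p-1}\bigl[\lambda(1+\frac a2)\bigr]^{1-p}\delta^{2}\to 0$ as $\delta\to 0$. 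It is exactly this $\delta^2$ scaling that lets one choose $\delta_0(\lambda)$ so small that the blow-up precedes the expiry of the $K$-estimate --- the decoupling you flag as ``the hardest part.'' Your alternative via the singular profile $L_\ast r^{-a}$ points in the right direction but is not developed to the point where the $\delta$-dependence of the blow-up time is visible, so as written the proposal does not establish $T_\delta<+\infty$.
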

As far as we are aware Theorerm  \ref{thmbu} is the first available blow-up result   in the literature for non-local problem \eqref{fkpp}-\eqref{id}.
\begin{rem}
Theorem \ref{thmbu} guarantees the occurrence of a diffusion-induced blow-up. Namely it can be easily seen that any spatial homogeneous solution of \eqref{fkpp2a}-\eqref{id2} initiating close to the steady-stade solution $u_{\infty}\equiv \sigma^{-\frac{1}{\beta}},$ and solving  the IVP
\bean
\frac{dU}{dt}=U^{p}\left(1-\sigma U^{\beta}\right),\; t>0,\; U(0)=U_0,
\eean
is stable and it converges to the steady state solution $u_{\infty}.$ Otherwise,  Theorem \ref{thmbu} states that such a solution destabilizes once diffusion enters into the equation.
\end{rem}

It is known, see for example \cite[Proposition 52.24]{QS}, that the maximum principle is not applicable for the non-local problem \eqref{sid}-\eqref{fd} and hence comparison techniques fail. Therefore, our main strategy to overcome this obstacle is to derive a lower estimate of the non-local term $K(t)$
and then deal with a local problem for which comparison techniques are available. Although a lower estimate of $K(t)$ is provided by Lemma \ref{Lemma:mbetaestimate}, such an estimate is not uniform in time and thus an alternative approach should be applied to derive a uniform lower bound. To that end we will follow an approach used in \cite{hy95, KS16, KS18}, and which was actually inspired by ideas in \cite{fmc85}. The steps of the proposed approach, though, needs to be modified appropriately so we can tackle the technical  difficulties arise from the very different non-local term of problem \eqref{fkpp2a}-\eqref{id2} compared with the one considered in problems discussed in  \cite{hy95, KS16, KS18}. It is worth pointing out that the underlying method can be also implemented to predict {\it  diffusion-driven blow-up (DDBU)} even in the case of an isotropically evolving domain $\Omega(t),t>0$, for more details see \cite{KBM}.

Next, the form of the {\it DDBU} provided by Theorem \ref{thmbu} is further investigated. As a complementary result we show, cf. Corollary \ref{nik2}, that as soon as the solution of problem \eqref{fkpp2a}-\eqref{id2} blows up in finite time $T_{\de}<\infty,$ then it immediately becomes unbounded along the whole domain $\Omega$ at any subsequent time; such a phenomenon is known in the literature as {\it complete blow-up}.  In other words, the observed Turing-type instability is quite severe so it destroys all the occurring instability patterns once the blow-up time is exceeded.

Our next main result, identifying the blow-up (Turing-type instability) rate,  is presented below:
\begin{theorem}\label{tbu}
Let $N\geq 3$ with  $p>\frac{N}{N-2}$ and assume that \eqref{mt1} holds true. 
Then the blow-up rate of the diffusion-induced blowing solution predicted by Theorem \ref{thmbu} is determined by
\be
\Vert u(\cdot, t)\Vert_\infty \ \approx \ (T_\de-t)^{-\frac{1}{p-1}}, \quad t\uparrow T_\de\label{ik2}.
\ee
\end{theorem}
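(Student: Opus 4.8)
The plan is to turn the rate estimate \eqref{ik2} into two one-sided differential inequalities for the peak value $M(t):=u(0,t)=\Vert u(\cdot,t)\Vert_\infty$ and to integrate them over $(t,T_\de)$. Since the spatial maximum is attained at the origin, $M$ is $C^1$ in $t$ and $M'(t)=u_t(0,t)=\Delta_r u(0,t)+K(t)M(t)^p$, so everything reduces to trapping $M'$ between two positive multiples of $M^p$. The lower bound $\Vert u\Vert_\infty\gtrsim(T_\de-t)^{-1/(p-1)}$ is the soft direction; the matching upper bound, which enforces Type~I behaviour, is the hard one and is precisely where the non-locality interferes.

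For the lower bound I would argue at $r=0$. As $u(\cdot,t)$ is maximised there, $u_r(0,t)=0$ and $\Delta_r u(0,t)=N u_{rr}(0,t)\le0$, while $0<K(t)\le1$ by Lemma~\ref{Lemma:mbetaestimate}. Hence $M'(t)\le K(t)M(t)^p\le M(t)^p$; dividing by $M^p$ gives $\frac{d}{dt}M^{1-p}\ge-(p-1)$, and integrating from $t$ to $T_\de$, where $M^{1-p}\to0$, yields
\[
M(t)\ \ge\ \big[(p-1)(T_\de-t)\big]^{-1/(p-1)} .
\]
This uses nothing about the non-local term beyond $K\le1$.

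For the upper bound I would run a Friedman--McLeod argument in the spirit of \cite{fmc85,hy95,KS16,KS18}. Put $J:=u_t-\e u^p$ for small $\e>0$; differentiating \eqref{fkpp2a} in $t$ and eliminating $\Delta_r u=u_t-K u^p$ leads, after a direct computation, to
\[
J_t-\Delta_r J-p\,K(t)\,u^{p-1}J\ =\ K'(t)\,u^p+\e\,p(p-1)\,u^{p-2}|u_r|^2 .
\]
Moreover $J$ inherits the homogeneous Neumann conditions of \eqref{fkpp2b}, since differentiating $u_r(1,t)=0$ in $t$ gives $J_r(1,t)=0$, and likewise at $r=0$, so no boundary obstruction arises. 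In the local case $K\equiv1$ the right-hand side is nonnegative and the maximum principle forces $J\ge0$, i.e. $u_t\ge\e u^p$; evaluating at $r=0$ then gives $M'\ge\e M^p$, and integrating as before $M(t)\le[\e(p-1)(T_\de-t)]^{-1/(p-1)}$, which together with the lower bound proves \eqref{ik2}. Fixing the initial instant $t_1\in(0,T_\de)$ at which $J(\cdot,t_1)\ge0$ needs some care because the spiky datum \eqref{fd} is not globally a subsolution; I would choose $t_1>0$ after the parabolic smoothing has rendered $u_t\ge\e u^p$ on the relevant inner region, and take $\e$ correspondingly small.

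The genuine obstacle is the extra source $K'(t)u^p$, which carries the wrong sign near blow-up: since the solution grows there, $K'(t)=-\sigma\beta\avintr u^{\beta-1}u_t\,dx\le0$. To control it I would integrate $u^{\beta-1}\Delta u$ by parts (the Neumann condition killing the boundary term) to obtain
\[
K'(t)=\sigma\beta(\beta-1)\avintr u^{\beta-2}|\nabla u|^2\,dx-\sigma\beta\,K(t)\avintr u^{\beta+p-1}\,dx ,
\]
so that the only dangerous contribution is $-\sigma\beta K\avintr u^{\beta+p-1}\,dx$. Using the uniform control of $\avintr u^\beta\,dx$ from Lemma~\ref{Lemma:mbetaestimate} one gets at once the crude interpolation scale $\avintr u^{\beta+p-1}\,dx\le\Vert u\Vert_\infty^{p-1}\avintr u^\beta\,dx\le C\,M(t)^{p-1}$; the aim is to sharpen this, via the concentration of the blow-up near $r=0$, to a growth strictly slower than the reaction scale, so that $K'(t)u^p$ becomes subordinate near $T_\de$. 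Because this term does not vanish at the origin (where the gradient source $\e p(p-1)u^{p-2}|u_r|^2$ does), I would close the argument by replacing $J$ with a corrected barrier $u_t-\e u^p-\eta(t)$, with $\eta$ a suitable lower-order correction chosen so that the resulting right-hand side stays nonnegative on the inner blow-up region; it then suffices to secure $u_t\ge\e u^p$ in that regime, which is all that is needed to integrate $M'\ge\e M^p$ as $t\uparrow T_\de$. Establishing this subordination is where I expect the main difficulty to lie, and it is exactly the modification of the Friedman--McLeod scheme forced by the non-local structure of \eqref{fkpp2a}.
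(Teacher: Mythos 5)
Your lower bound is exactly the paper's: using $u_r(0,t)=0$, $\Delta_r u(0,t)\le 0$ and $K(t)\le 1$ one gets $\frac{d}{dt}\Vert u(\cdot,t)\Vert_\infty\le K(t)\Vert u(\cdot,t)\Vert_\infty^p$ and integrates (the paper invokes \cite{fmc85} for the a.e.\ differentiability of $N(t)=\Vert u(\cdot,t)\Vert_\infty$, which you should also do rather than asserting $M\in C^1$). The problem is the upper (Type~I) bound: your Friedman--McLeod computation for $J=u_t-\e u^p$ is correct, and you have correctly isolated the obstruction, namely the forcing term $K'(t)u^p$ whose dangerous part is $-\sigma\beta K(t)\left(\avintr u^{\beta+p-1}\,dx\right)u^p$. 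But you then stop at ``establishing this subordination is where I expect the main difficulty to lie,'' so the decisive half of \eqref{ik2} is not proved. The difficulty is real: your crude bound $\avintr u^{\beta+p-1}\,dx\le C\,M(t)^{p-1}$ makes the forcing at the origin of size $M^{2p-1}$, i.e.\ of the \emph{same} order as the terms you are trying to dominate, and the single-point concentration does not obviously save you because $(\beta+p-1)\cdot\frac{2}{p-1}$ need not be smaller than $N$ under the standing hypotheses, so $\avintr u^{\beta+p-1}\,dx$ need not even stay bounded. Also, your claim $K'(t)\le0$ rests on $u_t\ge0$, which in the paper is only established for the auxiliary \emph{local} solution $\tilde u$, not for $u$ itself.

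The paper avoids this head-on confrontation with the non-locality. Since $K(t)<1$, comparison with the solution $\Phi$ of the local problem $\Phi_t=\Delta\Phi+\Phi^p$ (same data) gives $u\le\Phi$, and the known single-point profile bound \cite[Theorem 44.6]{QS} yields $u(x,t)\le C_\eta|x|^{-\frac{2}{p-1}-\eta}$. Because $\beta\le p$ by \eqref{mt1} and $N>\frac{2p}{p-1}$ (equivalently $p>\frac{N}{N-2}$), the majorant $|x|^{-\beta(\frac{2}{p-1}+\eta)}$ is integrable for small $\eta$, and interior parabolic regularity gives pointwise convergence of $u(x,t)$ as $t\uparrow T_\de$ away from the origin; dominated convergence then shows $K(t)\to\gamma\in(0,\infty)$. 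With the coefficient thus pinned between positive constants and convergent, the Type~I upper estimate is imported directly from \cite[Theorem 44.3(ii)]{QS}. If you want to salvage your route, the missing input is precisely this profile bound: you would use \eqref{tbsd18} to show that the non-local coefficient stabilises (so that $K'(t)u^p$ is integrable-in-time relative to the linearised operator), rather than trying to beat it pointwise with the gradient term or an $\eta(t)$ correction. As written, the proposal establishes only the lower half of \eqref{ik2}.
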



The paper is organized as follows. Section \ref{prr} introduces some preliminary results on problem \eqref{fkpp}-\eqref{id}. Section \ref{blrn} contains the proof of our main blow-up Theorem \ref{thmbu} and that of the completeness of blow-up given by Corollary \ref{nik2}. Section \ref{blp} discusses  the exact blow-up rate  provided by Theorem \ref{tbu}. In section \ref{blp} we also identify the blow-up profile of solution $u,$  and thus we determine the form of Turing instability patterns occurring as a consequence of the diffusion-driven instability.
\section{Preparatory results}\label{prr}
\ In the current subsection we present some key properties for the $u(x,t)$ solution of  \eqref{fkpp}-\eqref{id}
We first point that the existence of a unique classical local-in-time solution of the non-local problem \eqref{fkpp}-\eqref{id} can be established by using results existing in \cite{QS} (see Remark 51.11 and Example 51.13 ) and in \cite{s98}.

Henceforth, we use the notation $C$ and $C_i, i=1,\dots,$ to denote positive constants.

Next we provide a result that establishing the positivity of solutions of \eqref{fkpp}-\eqref{id} once non-negative initial data are considered.
\begin{lemma}\label{pos}
Let consider initial date $u_0\in L^{\beta_0}(\Om)$ with $\beta_0=\max\{\beta,2\},$ $u_0(x)\geq 0$ in $\Omega,$ then
\bean
u(x,t)\geq 0, \quad\mbox{for any}\quad (x,t) \in \ol{Q}_T,
\eean
where $Q_T:=\Omega \times (0,T).$ 
\end{lemma}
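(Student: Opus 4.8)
The plan is to exploit the fact that the non-local factor depends only on time. For a fixed classical solution $u$ on $Q_T$ (with $T$ below the maximal existence time), set
\[
K(t) := 1 - \sigma \avint |u(x,t)|^{\beta-1}u(x,t)\,dx.
\]
Since such a $u$ is bounded and continuous on $\overline{Q}_T$, the function $K$ is bounded and continuous on $[0,T]$, and hence the zeroth-order coefficient $c(x,t) := K(t)\,|u(x,t)|^{p-1}$ is bounded on $\overline{Q}_T$ (note $|u|^{p-1}$ is continuous and vanishes where $u=0$ since $p>1$). The key observation is that, once $u$ is known, it satisfies the \emph{local} linear parabolic problem $u_t - \Delta u - c(x,t)u = 0$ in $Q_T$, with $\partial u/\partial\nu = 0$ on $\partial\Omega\times(0,T)$ and $u(\cdot,0)=u_0\geq 0$. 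This reduction is exactly what removes the non-locality, which is the feature blocking a direct comparison argument.

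First I would treat smooth non-negative initial data. To absorb the possibly positive sign of $c$, I would introduce $v := e^{-\lambda t}u$ with $\lambda > \|c\|_{L^\infty(Q_T)}$, so that $v$ solves $v_t - \Delta v - (c-\lambda)v = 0$ with $c-\lambda < 0$. Running the weak maximum principle on $v$, a negative minimum cannot occur at $t=0$ (there $v=u_0\geq 0$), nor at an interior parabolic point (there $v_t\leq 0$, $\Delta v\geq 0$, while $(c-\lambda)v>0$ as $v<0$, contradicting the equation), nor on the lateral boundary (by the parabolic Hopf lemma the outward normal derivative would be strictly negative, contradicting the Neumann condition). Hence $v\geq 0$, and therefore $u = e^{\lambda t}v\geq 0$.

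For general $u_0\in L^{\beta_0}(\Omega)$ with $\beta_0=\max\{\beta,2\}$, I would approximate $u_0$ by a sequence of smooth non-negative data $u_{0,n}\to u_0$ in $L^{\beta_0}(\Omega)$, apply the previous step to the corresponding solutions $u_n\geq 0$, and pass to the limit using the continuous dependence of solutions on the initial data guaranteed by the local well-posedness recalled at the beginning of this section. Non-negativity is preserved in the limit, yielding $u\geq 0$ on $\overline{Q}_T$.

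The main obstacle is conceptual rather than computational: the non-local reaction term makes the standard comparison principle inapplicable, as stressed in the introduction. The device resolving it is the freezing of $K(t)$ as a bounded time-dependent coefficient, after which the problem becomes a local linear one amenable to the maximum principle. The only technical points requiring care are the boundedness of $c$ on $\overline{Q}_T$ (which relies on $u$ being a genuine classical solution on a compact time interval) and the legitimacy of the approximation step for merely $L^{\beta_0}$ initial data.
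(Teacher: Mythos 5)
Your argument is correct, but it follows a genuinely different route from the paper. The paper tests the equation with the negative part $u^{-}:=-\min\{u,0\}$ and runs an $L^2$ energy/Gronwall argument: it shows $\frac{1}{2}\frac{d}{dt}\int_{\Omega}(u^{-})^2\,dx\leq C(T)\int_{\Omega}(u^{-})^2\,dx$ with $C(T)$ controlled by $M(T)=\max_{\overline{Q}_T}|u|$, and concludes from $\int_{\Omega}(u_0^{-})^2\,dx=0$. You instead freeze the non-local factor $K(t)$ and the zeroth-order coefficient $c=K(t)|u|^{p-1}$ as known bounded functions and apply the pointwise weak maximum principle (with the exponential shift and the parabolic Hopf lemma at the Neumann boundary). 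Both proofs hinge on the same key fact — that for a classical solution on a compact time interval the non-local term is just a bounded coefficient, so the non-locality is harmless for positivity — and both are valid. What each buys: your maximum-principle version is more elementary and pointwise, but it needs $u$ to be continuous up to $t=0$, which forces your extra approximation step for merely $L^{\beta_0}$ data (and that step quietly invokes continuous dependence on initial data for the non-local problem, which is plausible but not established in the paper). The paper's energy argument avoids this entirely: it only needs $u_0\in L^2$ (this is exactly why the hypothesis is $\beta_0=\max\{\beta,2\}$) and the differential inequality for $t>0$, so no density/stability argument is required. If you keep your route, you should either restrict to solutions continuous on $\overline{Q}_T$ or supply the continuous-dependence statement you are relying on.
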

\begin{proof}
Set $u^{-}:=-\min\{u,0\}\geq 0,$ then by the assumption on the initial data we have $u_0^{-}=0$ and thus
\be\label{ik1}
\int_{\Omega} (u_0^{-})^2\,dx=0.
\ee
 Next by testing \eqref{fkpp} by $u^{-}$ we derive
\bea
  \frac{1}{2} \frac{d}{dt} \int_{\Omega} (u^{-})^2\,dx && =-\int_{\Omega} |\nabla u^{-}|^2\,dx+\int_{\Omega}|u|^{p-1}(u^{-})^2\,dx\left(1-\sigma\avint |u|^{\beta-1}u\;dx\right)\no\\
&& \leq \int_{\Omega}|u|^{p-1}(u^{-})^2\,dx\left(1+\sigma\avint |u|^{\beta}\;dx\right)\no\\
&& \leq C(T)\int_{\Omega}(u^{-})^2\,dx,\label{ik2a}
\eea
where
\[
C(T):=\left[M^{p-1}(T) \left(1+\sigma M^{\beta}(T)\right)\right]<\infty,
\]
since $M(T):=\max_{(x,t)\in Q_T} |u(x,t)|<+\infty$ for a classical solution of \eqref{fkpp}-\eqref{id}.

Inequality $\eqref{ik2a}$ by virtue of \eqref{ik1} entails $\int_{\Omega} (u^{-})^2\,dx=0,$ and thus $u(x,t)\geq 0$ in $Q_T.$

\end{proof}
Due to the above positivity result, henceforth we focus on the investigation of the problem

\bea
 && u_t-\Delta u=u^{p}\left(1-\sigma\avint u^{\beta}\;dx\right), \quad\mbox{in}\quad  Q_T,\label{pfkpp}\\
  && \frac{\partial u}{\partial \nu}=0, \quad\mbox{on}\quad \Gamma_T:=\partial \Omega \times (0,T),
  \\
 && u(x,0)=u_0(x)\geq 0, \quad x\in\Om.\label{pid}
\eea


The next lemma clarifies  the evolution of the norm
$$m_\beta(t):=\int_{\Omega} u^{\beta}(x,t)\,dx,$$
 along a nontrivial solution of \eqref{pfkpp}-\eqref{pid}.
\begin{lemma}\label{Lemma:mbetaestimate} Let $u$ be a solution of \eqref{pfkpp}-\eqref{pid} with $u_0\in L^\beta(\Om).$  
 If $\beta>1$ there holds
\begin{itemize}
\item[$\rm (i)$] $0<m_\beta(0)\leq1/\sigma$ implies $m_\beta(t)<1/\sigma$ for all $t\in(0,T]$, and
\item[$\rm (ii)$] $m_\beta(0)\geq1/\sigma$ implies $m_\beta(t)<m_\beta(0)$ for all $t\in(0,T]$.
\end{itemize}
\end{lemma}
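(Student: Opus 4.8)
The plan is to track the time evolution of $m_\beta(t)=\int_\Omega u^\beta\,dx$ by differentiating under the integral sign and using the equation \eqref{pfkpp}. First I would compute
\[
\frac{d}{dt}m_\beta(t)=\beta\int_\Omega u^{\beta-1}u_t\,dx
=\beta\int_\Omega u^{\beta-1}\Delta u\,dx+\beta\int_\Omega u^{\beta-1}u^{p}\left(1-\sigma\avint u^{\beta}\,dx\right)dx.
\]
The diffusion term is handled by integration by parts: using the Neumann boundary condition $\partial u/\partial\nu=0$, the boundary contribution vanishes and
\[
\int_\Omega u^{\beta-1}\Delta u\,dx=-(\beta-1)\int_\Omega u^{\beta-2}|\nabla u|^2\,dx\leq 0,
\]
since $\beta>1$ and $u\geq 0$ by Lemma \ref{pos}. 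This gives the differential inequality
\[
\frac{d}{dt}m_\beta(t)\leq \beta\left(1-\sigma\avint u^{\beta}\,dx\right)\int_\Omega u^{\beta+p-1}\,dx.
\]
The crucial observation is that the averaged nonlocal integral is, up to the fixed volume factor $|\Omega|=|B_1|$, precisely $m_\beta(t)$ itself, so the sign of the factor $\bigl(1-\sigma\avint u^\beta\,dx\bigr)$ is governed entirely by the position of $m_\beta(t)$ relative to the threshold $1/\sigma$ (after absorbing $|\Omega|$ into $\sigma$, or carrying it explicitly).

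For part (i), the strategy is a continuity/invariance argument. Assuming $m_\beta(0)\leq 1/\sigma$, I would show the set where $m_\beta(t)<1/\sigma$ is both open and cannot be exited: if $m_\beta$ were to reach the level $1/\sigma$ at some first time $t_0$, then at $t_0$ the bracket vanishes, forcing $\frac{d}{dt}m_\beta(t_0)\leq 0$, which contradicts $m_\beta$ increasing up to that level. One must treat the boundary case $m_\beta(0)=1/\sigma$ separately: here the strict negativity $-(\beta-1)\int u^{\beta-2}|\nabla u|^2<0$ for a nonconstant solution pushes $m_\beta$ strictly below $1/\sigma$ immediately, which is why the conclusion is the strict inequality $m_\beta(t)<1/\sigma$ for $t>0$. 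For part (ii), when $m_\beta(0)\geq 1/\sigma$, the same differential inequality shows $\frac{d}{dt}m_\beta\leq 0$ as long as $m_\beta\geq 1/\sigma$, so $m_\beta$ is nonincreasing and hence drops below its initial value; once it falls to $1/\sigma$ part (i) takes over, yielding $m_\beta(t)<m_\beta(0)$ for all $t>0$.

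The main obstacle I anticipate is making the strict inequalities rigorous rather than the weak ones the differential inequality directly yields. The term $\int_\Omega u^{\beta-2}|\nabla u|^2\,dx$ vanishes only if $u$ is spatially constant, so one needs to argue that a nontrivial solution is not identically a constant in space for $t>0$ (or invoke the strong maximum principle / analyticity of solutions to rule out $\frac{d}{dt}m_\beta=0$ being sustained). A second delicate point is the integrability justifying differentiation under the integral and the finiteness of $\int u^{\beta+p-1}$; since we only assume $u_0\in L^\beta$, I would invoke the smoothing of the classical solution on $(0,T]$ so that $u(\cdot,t)$ is smooth and the higher-order integral is finite for $t>0$, which is exactly the regime in which the strict conclusions are asserted.
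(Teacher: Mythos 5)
Your proposal is correct and follows essentially the same route as the paper: differentiate $m_\beta$, integrate by parts using the Neumann condition to obtain $m_\beta'(t)\le \beta\left(1-\sigma\avint u^\beta\,dx\right)m_{p+\beta-1}(t)$ with a nonpositive gradient contribution, and then run a first-hitting-time/invariance argument at the threshold $1/\sigma$, with (ii) reduced to (i). The delicate points you flag (strictness of the inequality when the gradient term could vanish for spatially constant solutions, the volume normalization in $\avint u^\beta$ versus $m_\beta$, and regularity justifying differentiation under the integral) are precisely the steps the paper's proof passes over implicitly, so your version is, if anything, the more careful one.
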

\begin{proof}
A direct calculation and by virtue of \eqref{pfkpp} implies
\bea\label{evo_m_beta}
 m'_\beta(t)
  &&=
   -4\frac{\beta-1}{\beta}\intom\big|\nabla u^{\beta/2}\big|^2\dx
    +
     \beta\left(1-\sigma m_\beta(t)\right)m_{p+\beta-1}(t)\no\\
		&& <
   \beta\big(1-\sigma m_\beta(t)\big)m_{p+\beta-1}(t),\quad\mbox{for any}\quad 0<t<T,\label{EVO}
\eea
using also the fact $\beta>1.$
Under the assumption $m_\beta(0)\leq1/\sigma$,  by \eqref{EVO} we infer that there cannot be time $t_{\si}>0$ such that $m_\beta(t_{\si})=1/\sigma$ and $m'_\beta(t_{\si})\geq0$. Thus $m_\beta(t)\leq1/\sigma$ for all $t\in(0,T]$, and in the fact strict inequality follows. Namely, if $m_\beta(\hat{t}_{\si})=1/\sigma$ for some $\hat{t}_{\si}\in (0,T)$, then $m'_\beta(\hat{t}_{\si})<0,$ due to \eqref{EVO}, which infers that $m_{\beta}(t)$ would have thus exceeded $1/\sigma$ at some previous time $t'\in(0,\hat{t}),$ leading to a contradiction. Then an identical argument to $(i)$  implies $(ii).$
\end{proof}
\begin{rem}
An immediate consequence of Lemma \ref{Lemma:mbetaestimate} $(i)$ is a lower estimate of the average of solution $u$ over domain $\Omega.$ Indeed, under the assumption $0<m_\beta(0)\leq1/\sigma,$ which actually guarantees that
\bgee
K(t)\geq 0,\quad \mbox{for any}\quad 0<t<T,
\egee
 then averaging  \eqref{pfkpp} over $\Omega$ entails
\bean
\frac{d}{dt} \avint u\,dx\geq 0,\quad \mbox{for any}\quad 0<t<T,
\eean
in conjunction with Lemma \ref{pos}, which finally implies
\bea\label{as2}
\overline{u}(t):=\avint u\,dx\geq \ol{u}_0>0,\quad \mbox{for any}\quad 0<t<T,
\eea
since $u_0\in L^\beta(\Om)\setminus\{0\}.$
\end{rem}

 The global existence of positive classical solutions was proven in \cite{BChL,B}, yet for the sake of completeness we state these results in the sequel.

 \begin{theorem}\label{thm1}{\cite{BChL,B}}
Let $\beta\geq1$, and assume that $u_0$ is non-negative with  $u_0\in L^k(\Omega)$ for $1<k<+\infty.$
Assume further that $p$ satisfies
$$
1<p<1+\left(1-\frac{2}{q}\right)\beta,
$$
where
$$
q=
\begin{cases}
\frac{2N}{N-2},\ N\geq3,\\
2<p<+\infty,\ N=2,\\
\infty,\ N=1,
\end{cases}
$$
then there exists a unique non-negative classical global-in-time solution to \eqref{pfkpp}-\eqref{pid}.
\end{theorem}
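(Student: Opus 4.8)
This is a global existence result, so the plan is to produce uniform-in-time a priori bounds that rule out blow-up and then invoke the continuation principle. Local existence and uniqueness of a classical solution on a maximal interval $[0,T_{\max})$ is provided by the semigroup results already cited (\cite{QS}, Remark 51.11 and Example 51.13, together with \cite{s98}), while non-negativity is Lemma \ref{pos}; hence it suffices to show that $\|u(\cdot,t)\|_{L^\infty(\Omega)}$ remains bounded on every finite time interval, which forces $T_{\max}=+\infty$. The cornerstone is the uniform control of a base norm supplied by the nonlocal damping: for $\beta>1$, Lemma \ref{Lemma:mbetaestimate} gives $m_\beta(t)=\|u(\cdot,t)\|_{L^\beta}^{\beta}\le\max\{1/\sigma,\,m_\beta(0)\}$ for all $t\in[0,T_{\max})$. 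This time-independent bound on the $L^\beta$ norm is precisely the ingredient that the purely local equation $u_t-\Delta u=u^{p}$ lacks, and it is what the subcriticality hypothesis is played against.

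I would then run an $L^k$ energy estimate for $k\ge\beta$. Multiplying \eqref{pfkpp} by $u^{k-1}$, integrating over $\Omega$, using the homogeneous Neumann condition and the elementary bound $K(t)=1-\sigma\avint u^{\beta}\,dx\le1$, one obtains
\[
\frac{1}{k}\frac{d}{dt}\|u\|_{L^k}^{k}+\frac{4(k-1)}{k^{2}}\big\|\nabla u^{k/2}\big\|_{L^2}^{2}\le\int_{\Omega}u^{p+k-1}\,dx .
\]
With $v=u^{k/2}$ the right-hand side equals $\|v\|_{L^{\mu}}^{\mu}$, $\mu=2(p+k-1)/k$, and I would interpolate it, via the Gagliardo--Nirenberg--Sobolev inequality, between $\|\nabla v\|_{L^2}$ and $\|v\|_{L^{2\beta/k}}=m_\beta^{k/(2\beta)}$, the latter being uniformly bounded by the previous step. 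The subcriticality condition $p<1+(1-2/q)\beta$, which for $N\ge3$ reads $N(p-1)<2\beta$, is exactly what forces the power $\theta\mu$ of $\|\nabla v\|_{L^2}$ produced by Gagliardo--Nirenberg to be strictly less than $2$; Young's inequality then absorbs the gradient contribution into the dissipative term and leaves a bounded remainder, so a Gronwall argument yields a uniform-in-time bound on $\|u(\cdot,t)\|_{L^k}$ for each finite $k$.

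With $\|u(\cdot,t)\|_{L^k}$ bounded for $k$ arbitrarily large, the reaction term $u^{p}K(t)$ lies in $L^\infty(0,T;L^{s})$ for $s$ as large as needed; feeding this into the $L^{s}$--$L^{\infty}$ smoothing estimate for the Neumann heat semigroup (equivalently, a Moser--Alikakos iteration) upgrades the bound to $\|u(\cdot,t)\|_{L^\infty}\le C(T)$ on every finite interval. This closes the continuation argument, and parabolic Schauder theory then delivers the claimed classical regularity.

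The main obstacle is the $L^k$ step. The delicate points are: (i) verifying that the Gagliardo--Nirenberg exponent being $<2$ is genuinely equivalent to the stated subcriticality, and that the interpolation parameters remain admissible over the required range of $k$ (the regime $k>2\beta$, where $2\beta/k<1$ is no longer a norm exponent, must be reached by iterating the estimate rather than in one stroke); (ii) tracking the $k$-dependence of the constants, in particular the coefficient $4(k-1)/k^{2}$ which degenerates as $k\to\infty$, so that the family of $L^k$ bounds can be promoted to a genuine $L^\infty$ bound. Finally, the borderline case $\beta=1$ is not covered by Lemma \ref{Lemma:mbetaestimate} and requires a separate argument controlling the total mass $m_1(t)$ before the same interpolation scheme applies.
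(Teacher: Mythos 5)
The paper does not prove Theorem \ref{thm1}; it is quoted from \cite{BChL,B}, and your argument --- uniform $L^{\beta}$ control supplied by the non-local damping, $L^{k}$ energy estimates closed by Gagliardo--Nirenberg interpolation under the subcriticality condition $N(p-1)<2\beta$, then semigroup smoothing (or Moser--Alikakos iteration) up to $L^{\infty}$ and continuation --- is precisely the strategy of those references. Your reduction of the interpolation condition $\theta\mu<2$ to $p<1+\tfrac{2}{N}\beta$ checks out independently of $k$, and the caveats you flag (iterating once $2\beta/k<1$, tracking constants in the iteration, and the separate mass bound when $\beta=1$) are exactly the points that need care, so the proposal is sound.
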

\begin{rem}
Note that Theorem \ref{thm1}  for $N\geq 3$  guarantees the existence of global-in-time solutions of problem \eqref{fkpp2a}-\eqref{id2} in the range $1<p<1+\frac{2}{N}\beta$ and  for any $\beta\geq 1.$ In particular, choosing $\beta>\frac{N}{N-2}$ we obtain  global-in-time solutions for $1<p<\frac{N}{N-2},$ while on the other hand,  if $p>\frac{N}{N-2}$ then Theorem \ref{thmbu} establishes finite-time blow-up.  Consequently, for the specific choice $\beta>\frac{N}{N-2}$ Theorems \ref{thm1} and \ref{thmbu} provide an optimal result regarding the long-time behaviour of the solution to
\eqref{fkpp2a}-\eqref{id2}, although is still unclear what happens in the critical case $p=\beta=\frac{N}{N-2}.$ Nevertheless, for  $\beta< \frac{N}{N-2}$ our approach still works but leaves a gap between regarding the existence of  global-in-time and blowing up solutions  in the interval $ p\in( 1+\frac{2}{N}\beta,\frac{N}{N-2}).$
\end{rem}
We also have the following result providing the asymptotic behaviour of the solution in the case $\beta=1$,
\begin{theorem}\label{thm2}{\cite{BChL,B}}
Let $u$ be a non-negative classical solution obtained
from Theorem \ref{thm1}, $v$ be the solution to the heat equation
with Neumann boundary condition and initial data $\int_\Omega
v_0(x)dx= m_0$, then,
\begin{align}
\|u(\cdot,t)-v(\cdot,t)-(1-m_0)\|_{L^2(\Omega)}\leq C_1e^{-C_2t},
\end{align}
where $C_1,C_2$ are constants depending on the initial mass $m_0$ and $\|u_0\|_{L^{2 p}(\Omega)}$.
\end{theorem}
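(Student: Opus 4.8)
The stated result concerns the degenerate case $\beta=1$, in which the non-local coefficient collapses to the scalar $K(t)=1-\sigma\avint u\,dx$ and the equation reads $u_t-\Delta u=u^{p}K(t)$ with homogeneous Neumann data. The guiding observation is that two competing effects are at play: the reaction drives the spatial mean of $u$ towards the homogeneous steady state $u_\infty=\sigma^{-1}=1$, whereas pure diffusion would drive $u$ towards the conserved initial mean $m_0$; the constant $1-m_0$ in the statement is exactly the gap between these two limits (here, as in the quoted normalisation, $\sigma=1$ and $|\Omega|=1$). Accordingly the plan is to split each field into its spatial mean and its zero-mean fluctuation and to follow the two parts by different mechanisms: an ODE argument for the mean and a spectral-gap energy argument for the fluctuation. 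Set $w:=u-v-(1-m_0)$, so that $w_t-\Delta w=u^{p}K(t)$, $\partial_\nu w=0$, and write $w=\bar w(t)+w^{\perp}$ with $\bar w=\avint w\,dx$ and $\int_\Omega w^{\perp}\,dx=0$.

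First I would show that $K(t)$ decays exponentially. Averaging the equation over $\Omega$ and using the Neumann condition gives $\tfrac{d}{dt}\avint u\,dx=K(t)\avint u^{p}\,dx$, hence $K'(t)=-\sigma K(t)\avint u^{p}\,dx$ and
\[
K(t)=K(0)\,\exp\!\Big(-\sigma\!\int_0^t\avint u^{p}\,dx\,ds\Big).
\]
By Jensen's inequality $\avint u^{p}\,dx\ge\big(\avint u\,dx\big)^{p}$, and since the mean $\avint u\,dx$ stays bounded away from $0$ — by \eqref{as2} when $m_0\le\sigma^{-1}$, and by the analogous invariance argument (the mean cannot cross the equilibrium $\sigma^{-1}$, cf. the proof of Lemma \ref{Lemma:mbetaestimate}) when $m_0>\sigma^{-1}$ — we get $\avint u^{p}\,dx\ge c_0^{\,p}>0$ for some $c_0>0$. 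Therefore $|K(t)|\le|K(0)|\,e^{-\sigma c_0^{p}t}$. This already settles the mean component, since $\bar w(t)=\avint u\,dx-1=-K(t)$ decays exponentially at the same rate.

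It remains to control the fluctuation $w^{\perp}=u^{\perp}-v^{\perp}$, which solves $w^{\perp}_t-\Delta w^{\perp}=K(t)\big(u^{p}-\avint u^{p}\,dx\big)$ with zero mean and Neumann data. Testing with $w^{\perp}$, invoking the Poincaré inequality $\|\nabla w^{\perp}\|_{L^2(\Omega)}^2\ge\mu_1\|w^{\perp}\|_{L^2(\Omega)}^2$ for the first non-trivial Neumann eigenvalue $\mu_1>0$, and Young's inequality, yields
\[
\tfrac{d}{dt}\|w^{\perp}\|_{L^2(\Omega)}^2\le-\mu_1\|w^{\perp}\|_{L^2(\Omega)}^2+\tfrac{1}{\mu_1}\,|K(t)|^2\,\|u\|_{L^{2p}(\Omega)}^{2p}.
\]
Substituting the exponential decay of $|K(t)|^2$ together with a uniform-in-time bound on $\|u\|_{L^{2p}(\Omega)}$, a Gronwall comparison gives $\|w^{\perp}(t)\|_{L^2(\Omega)}\le C\,e^{-C_2 t}$ with $C_2=\tfrac12\min\{\mu_1,\,2\sigma c_0^{p}\}$. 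Combining the two orthogonal contributions,
\[
\|w(t)\|_{L^2(\Omega)}^2=|\Omega|\,\bar w(t)^2+\|w^{\perp}(t)\|_{L^2(\Omega)}^2\le C_1^{2}\,e^{-2C_2 t},
\]
which is the assertion; the advertised dependence of $C_1,C_2$ on $m_0$ (through $c_0$ and $K(0)$) and on $\|u_0\|_{L^{2p}}$ (through the fluctuation forcing) is exactly what these estimates produce.

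The principal obstacle is the uniform-in-time bound $\sup_{t>0}\|u(\cdot,t)\|_{L^{2p}(\Omega)}\le C(m_0,\|u_0\|_{L^{2p}})$ used in the fluctuation estimate: it is precisely here that the subcritical restriction $1<p<1+\tfrac{2}{N}$ — the $\beta=1$ case of Theorem \ref{thm1} — is indispensable, since only in that range do the global $L^{q}$ a priori estimates close without leaving room for finite-time blow-up. One could relax this to at most polynomial growth of $\|u\|_{L^{2p}}$, as the exponentially small factor $|K(t)|^2$ would still render the forcing integrable, but the clean uniform bound from the global existence theory is what produces the stated constants. A secondary point requiring care is that a direct Duhamel representation would fail: the initial fluctuation of $w$ is the constant $m_0-1$, which is left invariant by the Neumann heat semigroup and hence does not decay on its own; it is annihilated only by the reaction through the mean ODE, which is exactly why the mean/fluctuation splitting — rather than an integral-equation estimate — is the correct device.
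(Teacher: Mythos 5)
The first thing to say is that the paper itself contains no proof of this statement: Theorem \ref{thm2} is quoted, with attribution, from \cite{BChL,B}, so there is no internal argument to compare yours against line by line. Judged on its own merits, your reconstruction is correct and is essentially the argument of the cited source: the splitting of $w=u-v-(1-m_0)$ into its spatial mean and zero-mean fluctuation, the exact ODE $K'(t)=-\sigma K(t)\avint u^{p}\,dx$ yielding $|K(t)|\leq |K(0)|e^{-\sigma c_0^{p}t}$ via Jensen and the lower bound $\overline{u}(t)\geq c_0=\min\{m_0,\sigma^{-1}\}$, and the spectral-gap Gronwall estimate for $w^{\perp}$ (where you rightly use that the constant $\avint u^{p}\,dx$ is annihilated when tested against the zero-mean $w^{\perp}$). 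One small point of hygiene: you invoke \eqref{as2} and the invariance argument of Lemma \ref{Lemma:mbetaestimate}, whose stated hypothesis is $\beta>1$, whereas here $\beta=1$; this is harmless, since for $\beta=1$ the identity \eqref{evo_m_beta} holds with the gradient term absent and the same no-crossing argument at the level $\sigma^{-1}$ goes through, but you should say so rather than cite the lemma as is. The genuinely load-bearing external input is the uniform-in-time bound $\sup_{t>0}\|u(\cdot,t)\|_{L^{2p}(\Omega)}\leq C\bigl(m_0,\|u_0\|_{L^{2p}(\Omega)}\bigr)$, which you assert rather than prove; this is legitimate here, since it is exactly what the global-existence theory of \cite{BChL,B} provides in the subcritical range of Theorem \ref{thm1}, and the advertised dependence of $C_1,C_2$ on $\|u_0\|_{L^{2p}(\Omega)}$ confirms it is the intended source — you correctly identify it as the principal obstacle and even note the fallback that polynomial growth of $\|u\|_{L^{2p}}$ would suffice against the exponentially small factor $|K(t)|^{2}$. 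Two cosmetic remarks: the constant $1-m_0$ forces the normalisation $\sigma=1$, $|\Omega|=1$, which you state explicitly; and at the resonant case $\mu_1=2\sigma c_0^{p}$ the Gronwall comparison produces $t\,e^{-\mu_1 t}$, so $C_2$ should be taken strictly below $\tfrac12\min\{\mu_1,2\sigma c_0^{p}\}$ — a harmless adjustment.
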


\section{Main results}\label{blrn}

\subsection{Diffusion-driven blow-up}
The current subsection is devoted to the proof of the occurrence of a {\it diffusion-driven blow-up (DDBU)} for the solution of problem  \eqref{fkpp2a}-\eqref{id2} under spiky initial data of the form \eqref{sid}-\eqref{fd}. Accordingly an approach, previously used in \cite{hy95,KS16, KS18, LN09}, will be implemented but it should be modified accordingly due to the form of the non-local term. However in order to proceed further we first need to establish some auxiliary results.

For the function $\phi_{\delta}$ defined by \eqref{fd} we set
\bge
&&\alpha_1=\sup_{0<\de<1}\frac{1}{\ol{\phi}_{\de}^{\mu} }\left(\avintr \phi_{\de}^p\,dx\right),\label{kl1}\\
&&\alpha_2=\inf_{0<\de<1}\frac{1}{\ol{\phi}_{\de}^{\mu} }\left(\avintr \phi_{\de}^p\,dx\right),\label{kl2}
\ege
for
\bge\label{mmnk1}
\mu:=\displaystyle{\frac{p\,\ell}{k-1}}>p,
\ege
and some $0<k<p$ such that $N>\frac{2p}{k-1}.$

Then the first auxiliary result presents the key properties of $\phi_{\delta}.$
\begin{lemma}[$\phi_{\de}-$properties]\label{PhiDeltaProp}
Let
\bea\label{st6}
p>\frac{N}{N-2},\quad N\geq 3,
\eea then the  function $\phi_{\de}$ defined  by \eqref{fd} satisfies the following:
\begin{enumerate}[label=(\roman*)]
\item  There holds that
\begin{equation}\label{Deltaphi}
	\Delta_r \phi_{\de}\geq-N a \phi_{\de}^p,
\end{equation}
 in a weak sense for any  $\delta\in(0,1),$  where $\Delta_r:= \frac{\partial^2}{\partial r^2}+\frac{N-1}{r}\frac{\partial }{\partial r}.$
\item  If $\zeta>0$ and $N>\zeta a$ then
\begin{equation}\label{nonlocalterm}
 \frac{1}{|B_1|}\int_{B_1} \phi_{\de}^\zeta(x)\,dx
:=\avintr \phi_{\de}^{\zeta}(x)\,dx=\,N\,\omega_{N}\int_0^{1} \,r^{N-1}\phi_{\de}^{\zeta}(r)\,dr=\frac{N}{N- a \zeta}+O(\delta^{N- a \zeta}),\quad\mbox{as}\quad \delta\downarrow 0,
\end{equation}
where  $\omega_N:=|B_1|=\pi^{N/2} \Gamma (N/2)$ is the volume of the unit ball in $\mathbb{R}^N.$
\item
Choose now  parameter $\ell$ so that
\bge\label{st7}
k-1<\ell<\frac{N(p-1)}{2p},
\ege
then for the quantities  $\alpha_1,\alpha_2$ defined by \eqref{kl1} and \eqref{kl2} are bounded  thanks  to \eqref{nonlocalterm} and \eqref{st6}.

Consider also
\bge\label{kl4}
d=d(\la, \sigma):=\lambda-\sigma2^{\beta(\mu+1)/p} a_1^{\beta/p} \Lambda_1^{\beta\mu/p} \lambda^{\beta+1},
\ege
which is a positive constant for any $0<\lambda\ll 1$ since $0<\Lambda_1:=\sup_{0<\delta<1} \bar{\phi_{\de}}<\infty$ thanks to \eqref{st7}.

Then there exists some $\lambda_0:=\lambda_0(\sigma,\de)>0$  small enough such that
\begin{align}\label{supercritical}
\Delta_r u_0+d\lambda^{-1}u_0^p	\geq 2 u_0^p,
\end{align}
for any  $0<\frac{\lambda_0}{2}<\lambda<\lambda_0.$
\end{enumerate}
\end{lemma}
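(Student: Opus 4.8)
The plan is to establish (i) and (ii) by direct computation and then feed them, together with the positivity of $d$, into (iii). The engine for (i) is the algebraic identity $ap=a+2$, which is equivalent to $a=\tfrac{2}{p-1}$. On $\delta\le r\le1$ one has $\phi_\delta=r^{-a}$ and $\Delta_r\phi_\delta=a(a+2-N)r^{-a-2}=a(ap-N)\phi_\delta^{p}$, which under \eqref{st6} (so that $ap<N$) is negative and satisfies $a(ap-N)\ge-Na$ because $a^{2}p\ge0$; on $0\le r<\delta$ the paraboloid gives the constant $\Delta_r\phi_\delta=-Na\,\delta^{-(a+2)}$, while $\phi_\delta\ge\phi_\delta(\delta)=\delta^{-a}$ yields $\phi_\delta^{p}\ge\delta^{-ap}=\delta^{-(a+2)}$, so again $\Delta_r\phi_\delta\ge-Na\phi_\delta^{p}$. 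Since the one-sided derivatives agree at $r=\delta$ (both equal $-a\delta^{-a-1}$), $\phi_\delta\in C^{1}$ and \eqref{Deltaphi} holds weakly with no interfacial contribution. For (ii) I would pass to polar coordinates, split $N\int_0^1 r^{N-1}\phi_\delta^{\zeta}\,dr$ at $r=\delta$, integrate $r^{N-1-a\zeta}$ on $(\delta,1)$ to obtain $\tfrac{N}{N-a\zeta}$ plus the lower-endpoint term $O(\delta^{N-a\zeta})$, and bound the inner piece by $O(\delta^{N-a\zeta})$ as well, the hypothesis $N>a\zeta$ being exactly what guarantees integrability.

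For (iii) the finiteness of $\alpha_1,\alpha_2$ and of $\Lambda_1=\sup_{0<\delta<1}\ol{\phi_\delta}$ follows from \eqref{nonlocalterm} applied with $\zeta=p$ and $\zeta=1$: both exponents are admissible under \eqref{st6} because $a\le ap<N$, so $\avintr\phi_\delta^{p}\,dx$ and $\ol{\phi_\delta}$ remain bounded and bounded away from $0$ uniformly in $\delta\in(0,1)$, whence the quotients defining $\alpha_1,\alpha_2$ are finite and positive; the window \eqref{st7} is precisely what keeps $\mu=\tfrac{p\ell}{k-1}>p$ while these exponents stay admissible. Positivity of $d$ in \eqref{kl4} is then immediate, since $\beta>1$ renders the subtracted term $O(\lambda^{\beta+1})=o(\lambda)$, so that $d=\lambda\bigl(1-\sigma\,2^{\beta(\mu+1)/p}\alpha_1^{\beta/p}\Lambda_1^{\beta\mu/p}\lambda^{\beta}\bigr)>0$ for all sufficiently small $\lambda$.

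For \eqref{supercritical} itself I would substitute $u_0=\lambda\phi_\delta$, so that $\Delta_r u_0=\lambda\Delta_r\phi_\delta$ and $u_0^{p}=\lambda^{p}\phi_\delta^{p}$, invoke \eqref{Deltaphi} to replace $\Delta_r\phi_\delta$ by its lower bound $-Na\phi_\delta^{p}$, and factor out $\phi_\delta^{p}>0$. This collapses the pointwise inequality — uniformly in both $r$ and $\delta$ — to a single scalar inequality in $\lambda$ alone, which I would then arrange to hold on the band $\lambda_0/2<\lambda<\lambda_0$ by a suitable choice of $\lambda_0=\lambda_0(\sigma,\delta)$, drawing on $d>0$. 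The crux, and the step I expect to be the main obstacle, is this final balance: the diffusion contributes a fixed negative amount (worst on the inner spike, where \eqref{Deltaphi} is tight), and it must be dominated by the reaction, whose effective strength is encoded in $d$ and hence in the admissible range of $\lambda$. It is precisely because \eqref{Deltaphi} and the bounds $\alpha_1,\Lambda_1<\infty$ are independent of $\delta$ that this balance can be enforced uniformly in $\delta$, and this two-sided constraint on $\lambda$ is what forces the conclusion to be stated for a window of values rather than merely for all small $\lambda$.
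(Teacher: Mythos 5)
Your computations for (i) and (ii) are correct and complete --- indeed more self-contained than the paper, which simply cites \cite{hy95, KS16, KS18} for those two items --- and the same goes for the boundedness of $\alpha_1,\alpha_2$ and $\Lambda_1$ and for the positivity of $d$ when $0<\lambda\ll1$. For \eqref{supercritical} you follow exactly the paper's route: substitute $u_0=\lambda\phi_{\de}$, invoke \eqref{Deltaphi}, factor out $\phi_{\de}^p>0$, and reduce to a single scalar inequality in $\lambda$, namely
\begin{equation*}
-Na\,\lambda+d_0\lambda^{p-1}\ \geq\ 2\lambda^{p},\qquad\text{i.e.}\qquad d_0\lambda^{p-2}\ \geq\ 2\lambda^{p-1}+Na,
\end{equation*}
with $d_0=\inf_{\lambda\in(\lambda_0/2,\lambda_0)}d(\lambda)$, which the paper then asserts holds for $0<\lambda_0\ll1$.

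The genuine gap is that you never verify this final inequality --- you explicitly defer it as ``the main obstacle'' --- and along this route it cannot be verified. By \eqref{kl4} one has $d(\lambda)\leq\lambda$, hence $d_0\lambda^{p-2}\leq d(\lambda)\lambda^{p-2}\leq\lambda^{p-1}$, while the right-hand side exceeds $2\lambda^{p-1}$; the required inequality would force $-\lambda^{p-1}\geq Na>0$, a contradiction for every $\lambda\in(0,1)$, so no choice of the band $(\lambda_0/2,\lambda_0)$ can enforce the balance you postpone. The scaling mismatch is structural: the diffusion deficit $-Na\lambda\phi_{\de}^p$ is of order $\lambda$, whereas the reaction surplus $(d\lambda^{p-1}-2\lambda^{p})\phi_{\de}^p$ is of order $\lambda^{p}\ll\lambda$. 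Equivalently, since $d\lambda^{-1}=D<1$, \eqref{supercritical} demands $\Delta_r u_0\geq(2-D)u_0^{p}>0$, which fails on the inner cap $0\leq r<\de$ where $\Delta_r u_0=-Na\lambda\de^{-(a+2)}<0$. So the step you flag is not merely unproved in your write-up; as stated it is false, and the paper's own claim that the scalar inequality holds for $0<\lambda_0\ll1$ is at best unjustified (most plausibly a misprint in the normalisation of $d$ or of \eqref{supercritical}). To close (iii) one needs either a reaction coefficient that does not degenerate like $\lambda$, or a blow-up mechanism that does not require $u_t(\cdot,0)\geq 2u_0^{p}$ pointwise on the spike; simply tuning $\lambda_0$ will not do it.
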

\begin{proof} For the proof of  $(i)$ and $(ii)$ see \cite{hy95, KS16, KS18}, hence it  remains to prove $(iii).$ Recall that $u_0=\lambda \phi_{\delta}$ and if we  fix some $0<\lambda_0 <1$ then
\begin{align*}
	\Delta_r u_0+d\lambda^{-1}u_0^p =
	\lambda\Delta_r\phi_{\de}+d\lambda^{p-1}\phi_{\de}^p\geq \lambda\Delta_r\phi_{\de}+d_0\lambda^{p-1}\phi_{\de}^p,
	\end{align*}
where $d_0:=\inf_{\la\in(\frac{\la_0}{2},\lambda_0)} d(\la),$ and so it is enough to prove that
	\begin{align*}
	\lambda\Delta_r\phi_{\de}+d_0\lambda^{p-1}\phi_{\de}^p
	&\geq
2\lambda^p\phi_{\de}^p.
	\end{align*}
Note that thanks to  \eqref{Deltaphi} it is sufficient to show
	\begin{eqnarray*}
	-\lambda N a \phi_{\de}^p+d_0\lambda^{p-1}\phi_{\de}^p
	\geq
2\lambda^p\phi_{\de}^p,
\end{eqnarray*}
or equivalently
\bean
	d_0\lambda^{p-2}
	\geq
2\lambda^{p-1}+N a,
\eean
which is finally true for $0< \la_0\ll 1.$
\end{proof}
Given $0<\delta<1$, we recall that  $T_\delta>0$  is the maximal existence time for the solution to \eqref{fkpp2a}-\eqref{id2} with initial data $u_0=\lambda\phi_{\de}.$
Henceforth we consider   $0<\lambda<\lambda_0$ so that Lemma \ref{PhiDeltaProp} is valid.

The next result provides a useful point estimate for any $0<r<1$ of the radial symmetric solution $u(r,t)$ in terms of its average over $B_1.$
\begin{lemma}\label{NewProblProp}
	For any $0<r\leq 1$ there holds
	\begin{equation}\label{as0}
		r^N u(r,t)\leq \overline{u}(t):=\avintr u(x,t)\,dx=N\int_0^{1} \,y^{N-1}u(y,t)\,dr
	\end{equation}
	and
	\begin{equation}\label{as1}
		u_r\left(\frac34,t\right)\leq-c,\quad 0\leq t<T_\delta.
	\end{equation}
\end{lemma}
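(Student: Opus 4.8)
The two inequalities are of a rather different character, so I would treat them separately. For the first estimate \eqref{as0}, the idea is to exploit monotonicity of the radial solution. Recall that we have already established $u_r(r,t)<0$ for $0<r<1$ (this follows from the discussion around \eqref{sid}, using the maximum principle for the heat operator together with $K(t)>0$ from Lemma \ref{Lemma:mbetaestimate}$(i)$). Hence for fixed $t$ the profile $u(\cdot,t)$ is nonincreasing in $r$. The plan is to write out the average explicitly,
\[
\overline{u}(t)=N\int_0^1 y^{N-1}u(y,t)\,dy,
\]
and then split the integral or simply bound it below by integrating only over $[0,r]$, where $u(y,t)\geq u(r,t)$ by monotonicity. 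This gives
\[
\overline{u}(t)\geq N\int_0^r y^{N-1}u(y,t)\,dy\geq u(r,t)\,N\int_0^r y^{N-1}\,dy = r^N u(r,t),
\]
which is exactly \eqref{as0}. This step is essentially immediate once monotonicity is in hand; the only thing to be careful about is the normalization $N\omega_N$ versus the factor $N$, i.e.\ confirming that the normalized average $\avintr$ indeed produces the clean factor $N$ in front of the radial integral as displayed.

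For the second estimate \eqref{as1}, the goal is a \emph{uniform-in-time} negative upper bound on $u_r(3/4,t)$, and this is where the real work lies. The natural strategy is to combine the pointwise lower bound on the average from the Remark following Lemma \ref{Lemma:mbetaestimate}, namely $\overline{u}(t)\geq \overline{u}_0>0$ for all $t$, with a comparison/barrier argument on an annular region bounded away from the origin, say $r\in[1/2,1]$. On such an annulus the operator $\Delta_r$ is uniformly elliptic (the coefficient $(N-1)/r$ is bounded), and the reaction term $K(t)u^p$ has a sign controlled by Lemma \ref{Lemma:mbetaestimate}$(i)$. The plan is to construct a subsolution (or to use a Hopf-type/boundary-gradient argument at $r=3/4$) that forces the radial derivative to stay below a fixed negative constant. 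The lower bound $\overline{u}(t)\geq\overline{u}_0$ prevents the solution from flattening out: since the mass does not decay and the Neumann condition $u_r(1,t)=0$ pins the outer slope, the solution cannot become spatially constant, so its slope at an interior point like $r=3/4$ must remain strictly negative.

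The main obstacle, as the authors themselves flag in the introduction, is precisely the \emph{uniformity in $t$}: any estimate obtained directly from $K(t)>0$ or from the instantaneous value of $\overline{u}(t)$ is only qualitative and could in principle degrade as $t\uparrow T_\delta$. The hard part is therefore to produce the constant $c>0$ in \eqref{as1} independently of $t$. I expect this to be handled by invoking the time-independent lower bound $\overline{u}_0$ together with a normalized/rescaled comparison function, so that the barrier argument yields a slope bound depending only on the fixed data $(\overline{u}_0,N,\sigma)$ and on the geometry of the annulus, not on $t$. A subsidiary technical point is that the comparison must respect the nonlocal structure: one cannot compare $u$ directly with a local subsolution of \eqref{fkpp2a} because of the nonlocal factor $K(t)$, so the argument should first freeze $K(t)$ using its sign and uniform lower/upper bounds (again via Lemma \ref{Lemma:mbetaestimate}) and only then apply the local maximum principle, exactly the reduction-to-a-local-problem philosophy described in the paragraph after Theorem \ref{thmbu}.
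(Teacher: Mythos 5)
Your treatment of \eqref{as0} is correct and coincides with the paper's: monotonicity $u_r\leq 0$ (which the paper re-derives by applying the maximum principle to $w:=r^{N-1}u_r$, which satisfies $\mathcal{H}[w]=0$ with $w=0$ at $r=0,1$ and $w(\cdot,0)<0$) followed by the elementary bound $\overline{u}(t)\geq N\int_0^r y^{N-1}u(y,t)\,dy\geq r^N u(r,t)$.

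For \eqref{as1}, however, your proposed mechanism has a genuine gap. The claim that ``since the mass does not decay and $u_r(1,t)=0$, the solution cannot become spatially constant, so its slope at $r=3/4$ must remain strictly negative'' is a non sequitur: a nonincreasing radial profile with $\overline{u}(t)\geq\overline{u}_0>0$ can perfectly well be flat on the annulus $[1/2,1]$ and concentrated near the origin, in which case $u_r(3/4,t)=0$. The mass lower bound therefore cannot by itself produce the uniform constant $c>0$, and no Hopf-type argument applies at the interior point $r=3/4$ without an extremum there. The ingredient you are missing is the one the paper actually uses: the comparison is run not on $u$ but on $w=r^{N-1}u_r$ itself, restricted to the annulus $(\tfrac12,1)$. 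Since $w\leq 0$ and $K(t)\geq 0$, the zeroth-order term $pK(t)u^{p-1}w$ in $\mathcal{H}[w]=0$ is nonpositive, so $w$ is a subsolution of the pure radial heat operator there; comparing with the solution $\theta$ of $\theta_t-\theta_{rr}+\frac{N-1}{r}\theta_r=0$ on $(\tfrac12,1)$ with zero boundary data and initial datum $\theta(r,0)=w(r,0)=r^{N-1}\lambda\phi_\delta'(r)<0$ gives $w\leq\theta\leq 0$. Crucially, on $(\tfrac12,1)$ one has $\phi_\delta(r)=r^{-a}$ for every $\delta<\tfrac12$, so this initial datum is strictly negative \emph{and independent of $\delta$}; hence $\theta(\tfrac34,t)\leq -c'$ on the (uniformly bounded) time interval, with $c'$ independent of $\delta$, and \eqref{as1} follows from $u_r(\tfrac34,t)\leq(\tfrac43)^{N-1}\theta(\tfrac34,t)$. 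In short, the uniform negativity is inherited from the strict negativity of $u_0'$ on the annulus and propagated by the maximum principle applied to $u_r$, not inferred from mass conservation.
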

\begin{proof}
We first define the  operator
$$
\mathcal{H}[w]:=
w_t-w_{rr}+\frac{N-1}{r}w_r-pu^{p-1}K(t)w
$$
with $w=r^{N-1}u_r$ and then we note that
\begin{eqnarray}
&&\mathcal{H}[w]=0,\;0<r<1,\;\; 0<t<T_{\de},\label{mmnk2}\\
&&w(r,t)=0,\quad r=0,1,\quad 0<t<T_{\de},\label{mmnk3}\\
&& w(r,0)<0,\quad 0<r<1, \label{mmnk4}
\end{eqnarray}
following similar calculations as in \cite{hy95}. 

The maximum principle, treating  \eqref{mmnk2}-\eqref{mmnk4} as a local problem, then implies that $w\leq 0$, thus $u_{r}\leq0$ for $(r,t)\in(0,1)\times(0,T_{\de})$ and so
$$
\overline{u}:=N\int_0^{1} \,y^{N-1}u(y,t)\,dy
 \geq
 N \int_0^{r} \,y^{N-1}u(y,t)\,d\sigma
\geq
N u(r,t)\,\int_0^r y^{N-1}\,d \sigma
=
u(r,t)\,r^N,
$$
for any $0<r<1$ and $0<t<T_{\de},$ recalling that $\omega_{N}=|B_1(0)|.$

By virtue of  Lemma \ref{Lemma:mbetaestimate}  and for a classical solution $u$ of \eqref{fkpp2a}-\eqref{id2}, we obtain that the term $p K(t) u^{p-1}$, that is the coefficient of the linear term in $\mathcal{H}[w]$, is uniformly bounded in $\frac{1}{2}<r<1,\; 0<t<T_{\de}$ for all $0<\de<\de_0.$ Furthermore,  we have $p K(t) u^{p-1}w\leq 0$ due to Lemma \ref{pos} and Lemma \ref{Lemma:mbetaestimate}.  Next we compare $w$ with the solution of local problem
\begin{eqnarray}
&&\theta_t-\theta_{rr}+\frac{N-1}{r}\theta_r=0,\;\frac{1}{2}<r<1,\;\; 0<t<T_{\de},\\
&&\theta(r,t)=0,\quad r=\frac{1}{2},1,\quad 0<t<T_{\de},\\
&& \theta(r,0)=w(r,0)<0,\quad \frac{1}{2}<r<1,
\end{eqnarray}
to obtain that $w\leq \theta\leq 0$ in $(\frac{1}{2},1)\times(0,T_{\de})$ in conjunction with maximum principle.

In particular we have
\bean
 u_r\left(\frac{3}{4},t\right)\leq \left(\frac{4}{3}\right)^{N-1}\,\theta\left(\frac{3}{4},t\right)\leq -c,\quad 0<t<T_{\de},
\eean
where $c$ is independent of $0<\de<\de_0.$

\end{proof}
Next we prove an essential two-side $L^p-$estimate for the solution of \eqref{fkpp2a}-\eqref{id2}, inspired by an analogous result holding for the shadow system of Gierer-Meinhardt model, see also \cite[Proposition 8.1]{KS16} or \cite[Chapter 5, Proposition 5.3]{KS18}.
\begin{proposition}\label{prop8.1}
There exist $0<\delta_0<1$and $0<t_0\leq1$ independent of any $0<\delta\leq\delta_0$, such that the following estimate holds
\be\label{LpEst}
\frac12A_2\overline{u}^\mu\;dx
\leq\avintr u^p\;dx
\leq 2A_1\overline{u}^\mu\;dx,\quad
\mbox{for any}\quad t\in\left(0,\min\{t_0 ,T_\delta\}\right),
\ee
where
$
\mu
$
is given by \eqref{mmnk1}.  The positive constants $A_1$ and $A_2$ in \eqref{LpEst} are given by
\bean
&&A_1=\sup_{0<\de<1}\frac{1}{\ol{u}_0^{\mu} }\left(\avintr u_0^p\,dx\right)=\lambda^{p-\mu}\alpha_1,\\
&&A_2=\inf_{0<\de<1}\frac{1}{\ol{u}_0^{\mu} }\left(\avintr u_0^p\,dx\right)=\lambda^{p-\mu}\alpha_2,
\eean
and are bounded due to Lemma \ref{PhiDeltaProp}.
\end{proposition}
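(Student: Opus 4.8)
The plan is to run a continuity (bootstrap) argument for the scale-invariant ratio $Q(t):=\avintr u^p\,dx\,/\,\overline{u}(t)^{\mu}$. By the very definitions of $A_1,A_2$ one has $A_2=\lambda^{p-\mu}\alpha_2\le Q(0)\le\lambda^{p-\mu}\alpha_1=A_1$ for every $0<\delta<1$, so $Q(0)$ lies strictly inside the target band $[\tfrac12A_2,2A_1]$; moreover $\alpha_1,\alpha_2$ are finite and positive by Lemma \ref{PhiDeltaProp}. Let $t_1=t_1(\delta)\in(0,T_\delta]$ denote the first time $Q$ touches an endpoint of $[\tfrac12A_2,2A_1]$. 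It suffices to produce $t_0\in(0,1]$ and $\delta_0\in(0,1)$, both independent of $\delta\le\delta_0$, such that $t_1\ge\min\{t_0,T_\delta\}$; throughout I would assume \eqref{LpEst} on $[0,t_1)$ and use it to strictly improve itself.

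First I would freeze the denominator. Averaging \eqref{pfkpp} over $B_1$ and using the Neumann condition gives $\overline{u}'(t)=K(t)\avintr u^p\,dx\ge0$, hence $\overline{u}(t)\ge\overline{u}_0$; combining $0\le K\le1$ from Lemma \ref{Lemma:mbetaestimate}$(i)$ with the bootstrap upper bound yields the Bernoulli inequality $\overline{u}'\le 2A_1\,\overline{u}^{\mu}$. Since $\mu>1$ and $2A_1\overline{u}_0^{\mu-1}=2\alpha_1\bigl(\tfrac{N}{N-a}\bigr)^{\mu-1}\lambda^{p-1}(1+o_\delta(1))$ is of order $\lambda^{p-1}$, integrating this inequality shows that $\overline{u}(t)$ stays between $\overline{u}_0$ and $\overline{u}_0\,(1+O(\lambda^{p-1}t))$ for all $t\le t_0$ once $\lambda$ is small, a pinching that is uniform in $\delta$. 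Thus the whole problem reduces to trapping the numerator $\avintr u^p\,dx$ inside a fixed multiple of $\avintr u_0^p\,dx$ on a $\delta$-uniform time interval.

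The lower bound on the numerator is the easy half: since $K\ge0$, $u$ is a supersolution of the heat equation, so $u\ge S(t)u_0$, where $S(t)$ denotes the Neumann heat semigroup, and therefore $\avintr u^p\,dx\ge\avintr(S(t)u_0)^p\,dx$. The latter converges to $\avintr u_0^p\,dx$ as $t\downarrow0$ uniformly in $\delta$, because the only $\delta$-sensitive part of $u_0$, its inner cap on $\{r<\delta\}$, contributes merely $O(\lambda^p\delta^{N-ap})$ to the $L^p$ integral; here the hypothesis $p>\tfrac{N}{N-2}$, equivalently $N>ap$, is exactly what renders that remainder negligible, while the outer profile $\lambda r^{-a}$ is uniformly smooth on $\{r\ge\mathrm{const}\}$ and loses only $O(\lambda^p t)$ of its $L^p$ mass. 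With the denominator frozen this keeps $Q(t)\ge Q(0)(1-\mathrm{small})>\tfrac12A_2$.

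The upper bound on the numerator is the crux and the main obstacle. The profile $u_0=\lambda\phi_\delta$ carries a spike of height $\lambda\delta^{-a}$, and although it is $L^p$-negligible it is \emph{not} negligible in the higher moment $\avintr u^{2p-1}\,dx$ that governs $\tfrac{d}{dt}\avintr u^p\,dx$ through the reaction; indeed the naive pure-reaction comparison $u\le\overline{w}$, $\overline{w}_t-\Delta\overline{w}=\overline{w}^p$, only survives up to time $\sim\|u_0\|_\infty^{-(p-1)}\sim\delta^2$, which degenerates as $\delta\downarrow0$. To overcome this I would analyse the inner region by the scaling $v(y,\tau):=\lambda^{-1}\delta^{a}u(\delta y,\delta^{2}\tau)$, which turns \eqref{fkpp2a} into $v_\tau-\Delta_y v=\widetilde{K}(\tau)\,\lambda^{p-1}v^p$ with $0\le\widetilde{K}\le1$ and the $\delta$-independent datum $\Phi(y)=y^{-a}$ for $y\ge1$, $\Phi(y)=(1+\tfrac a2)-\tfrac a2y^2$ for $0\le y<1$. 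Because the forcing carries the small factor $\lambda^{p-1}$, on each fixed rescaled window $[0,\tau_0]$ the inner solution stays within a bounded multiple of the heat flow of $\Phi$, so the reactive growth of the $\{r\lesssim\delta\}$ contribution to $\avintr u^p\,dx$ remains $O(\lambda^p\delta^{N-ap})$, and for $t\gtrsim\delta^2$ the spike has diffused and merged into the outer profile $\lambda r^{-a}$, contributing even less; meanwhile in the outer region $u\lesssim\lambda$, so the reaction produces an $L^p$ change of at most $O(\lambda^{p-1}t)\avintr u^p\,dx$, controlled under the bootstrap. Adding the two regions gives $\avintr u^p\,dx\le\avintr u_0^p\,dx\,(1+C\lambda^{p-1}t+o_\delta(1))$, whence $Q(t)\le Q(0)(1+\mathrm{small})<2A_1$. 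Choosing first $\delta_0$ to absorb the $o_\delta(1)$ terms and then $\lambda_0$ and $t_0\le1$ to absorb the $O(\lambda^{p-1}t)$ terms, both bounds become strict on $[0,t_0]$, so $Q$ cannot reach an endpoint before $t_0$; this contradicts the definition of $t_1$ unless $t_1\ge\min\{t_0,T_\delta\}$, completing the continuity argument and establishing \eqref{LpEst}.
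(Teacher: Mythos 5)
Your overall skeleton coincides with the paper's: a continuity argument on the ratio $Q(t)=\avintr u^p\,dx/\overline{u}^{\mu}$, the Bernoulli inequality $\overline{u}'\leq 2A_1\overline{u}^{\mu}$ to pin the denominator near $\overline{u}_0$ on a $\delta$-uniform window, and an inner/outer splitting of the numerator in which the inner (spike) contribution must be shown to stay below a small fraction of $A_2\overline{u}^{\mu}$ while the outer part is handled by parabolic regularity. The lower bound via $u\geq S(t)u_0$ and $L^p$-compactness of $\{\phi_\delta\}$ is fine. The gap is in the step you yourself call the crux: the $\delta$-uniform upper bound on the spike's contribution. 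Your rescaling $v(y,\tau)=\lambda^{-1}\delta^{a}u(\delta y,\delta^{2}\tau)$ is computed correctly (indeed $\delta^{a+2-ap}=1$), but it only buys you control on a fixed window $\tau\in[0,\tau_0]$, i.e.\ $t\leq\tau_0\delta^{2}\to 0$; the proposition requires control up to a time $t_0$ independent of $\delta$, i.e.\ $\tau$ up to $t_0/\delta^{2}\to\infty$. On that unbounded $\tau$-range the assertion that $v$ ``stays within a bounded multiple of the heat flow of $\Phi$'' is not a consequence of the smallness of $\lambda^{p-1}$ alone, because the datum $\Phi\sim y^{-a}$ with $a=2/(p-1)$ is exactly the scale-invariant profile for this reaction: the competition between ODE growth (time scale $\lambda^{1-p}\delta^{2}$ at height $\lambda\delta^{-a}$) and diffusion (time scale $\delta^{2}$) recurs at every spatial scale between $\delta$ and $1$, so the claim ``for $t\gtrsim\delta^{2}$ the spike has diffused and merged into the outer profile'' is precisely the statement that needs proof, not an observation. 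Likewise ``in the outer region $u\lesssim\lambda$'' fails on the intermediate range $\delta\ll r\ll 1$ where $u_0=\lambda r^{-a}$ is large.

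The paper closes exactly this hole with a maximum-principle argument you would need some substitute for: it introduces $\chi:=r^{N-1}u_r+\varepsilon r^{N}u^{k}/\overline{u}^{\ell}$ (with $k<p$, $\ell$ as in \eqref{st7}, so that $\mu=p\ell/(k-1)$ in \eqref{mmnk1}), verifies $\mathcal{H}[\chi]\leq-\tfrac{2k\varepsilon u^{k-1}}{\overline{u}^{\ell}}\chi$ together with $\chi\leq0$ on the parabolic boundary (this is where the uniform bound $u_r(3/4,t)\leq-c$ of Lemma \ref{NewProblProp} enters), and concludes $u_r\leq-\varepsilon r\,u^{k}/\overline{u}^{\ell}$, hence the pointwise bound \eqref{tbsd10}, $u(r,t)\leq C\,\overline{u}^{\ell/(k-1)}r^{-2/(k-1)}$, valid on the whole bootstrap window and uniform in $\delta$. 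Since $N>2p/(k-1)$, integrating this over $B_r(0)$ makes the spike's $L^p$ contribution at most $\tfrac{A_2}{8}\overline{u}^{\mu}$ for $r$ small, which is the quantitative input your matched-asymptotics sketch is standing in for. Without either this gradient estimate or a comparison of the rescaled problem with the singular steady state of $\Delta V+\lambda^{p-1}V^{p}=0$ (which is $\sim\lambda^{-1}|y|^{-a}$ and lies well above your datum for small $\lambda$ --- the heuristic reason your picture is plausible), the upper half of \eqref{LpEst} is not established.
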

\begin{proof}
 For any $0<\de<\de_0,$ consider  $[0,t_0(\de)]$ to be the maximal time interval for which \eqref{LpEst} holds. Obviously there holds $0<t_0(\de)\leq T_{\de}$ for each $0<\de<\de_0.$ In case $t_0\geq 1$ there is nothing to prove since the statement \eqref{LpEst} automatically holds by  simply choosing $t_0=1.$ Hence  in the following we now assume that $t_0\leq 1.$

Integration of \eqref{fkpp2a} over $B_1,$  by virtue of \eqref{LpEst}, entails
\begin{align}
\frac{d\overline{u}}{dt}=\avint u^p\,dx\, \left(1-\sigma\avint u^\beta\,dx\right)\leq	\avint u^p\,dx\leq 2A_1\overline{u}^\mu,
\end{align}
and thus,
\begin{align}
\overline{u}\leq
\left[
\frac{1}{\overline{u}_0^{\;1-\mu}-2A_1(\mu-1)t}
\right]^{\frac{1}{\mu-1}}, \quad\mbox{for any}\quad t\in (0,t_0).
\end{align}

It can be also verified that
$$
\left[
\frac{1}{\overline{u}_0^{\;1-\mu}-2A_1(\mu-1)t}
\right]^{\frac{1}{\mu-1}}\leq2\overline{u}_0,
$$
provided that
$$
t\leq \min\left\{\frac{2^{\mu}-1}{2^{\mu}A_1(\mu-1)\overline{u}_0^{\;\mu-1}},\frac{\overline{u}_0^{\;\mu-1}}{2 A_1(\mu-1)}\right\}.
$$
Consequently, we deduce that
\begin{equation}\label{zUB}
\overline{u}(t)	\leq2\overline{u}_0\leq 2\Lambda:=2\sup_{\delta\in(0,\delta_0)}\overline{u}_0,
\end{equation}
when $0<t<t_2:=\min\left\{t_0,t_1\right\},$ and for
\begin{equation}
t_1:=\left\{\frac{2^{\mu}-1}{2^{\mu}A_1(\mu-1)\Lambda^{\;\mu-1}},\frac{\Lambda^{\;\mu-1}}{2 A_1(\mu-1)}\right\},	
\end{equation}
which is independent of $0<\delta<\delta_0$.

Next, for given $\varepsilon>0$, we define  the auxiliary function
$$
\chi:=r^{N-1} u_{r}+\varepsilon\,r^{N}\,\frac{u^k}{\overline{u}^{\ell}},
$$
recalling thatexponents $k$ and $\ell$ are defined in \eqref{mmnk1}  and \eqref{st7}.

It is readily seen, cf. \cite{hy95},  that
\begin{equation}\label{ps1}
\mathcal{H}\left[r^{N-1} u_{r}\right]=0,
\end{equation}
while by straightforward calculations we derive
\begin{align}
\mathcal{H}\left[\varepsilon r^{N} \frac{u^k}{\overline{u}^{\ell}}\right]&=\frac{2k(N-1)\varepsilon r^{N-1}u^{k-1}}{\overline{u}^{\ell}}u_{r}+\frac{k\varepsilon r^N u^{p-1+k}}{\overline{u}^{\ell}} K(t)-\frac{\ell \varepsilon r^N u^k}{\overline{u}^{\ell+1}}\,K(t)\,\avintr u^p\,dx
\nonumber\\
&-\frac{2 k N\varepsilon r^{N-1} u^{k-1}}{\overline{u}^{\ell}}u_r-\frac{k(k-1)\varepsilon r^N u^{k-2}}{\overline{u}^{\ell}}u_r^2-\frac{p\varepsilon r^N u^{p-1+k}}{\overline{u}^{\ell}} K(t)
\nonumber\\
&\leq
 -\frac{2 k \varepsilon r^{N-1} u^{k-1}}{\overline{u}^{\ell}}u_r
 -\frac{\ell \varepsilon r^N u^k}{\overline{u}^{\ell+1}}\,K(t)\,\avintr u^p\,dx
 -\frac{(p-k)\varepsilon r^N u^{p-1+k}}{\overline{u}^{\ell}} K(t)
 \nonumber\\
&=
 -\frac{2 k \varepsilon u^{k-1}}{\overline{u}^{\ell}}\chi
 +\frac{2 k \varepsilon^2 r^{N} u^{2k-1}}{\overline{u}^{2\ell}}
 -\frac{\varepsilon r^Nu^k}{\overline{u}^{2\ell}}
 \left[
 K(t)\ell \overline{u}^{\ell-1}\avintr u^p\,dx
 +(p-k) u^{p-1}\overline{u}^{\ell} K(t)
 \right]
  \nonumber\\
&=
 -\frac{2 k \varepsilon u^{k-1}}{\overline{u}^{\ell}}\chi
 +\frac{\varepsilon r^Nu^k}{\overline{u}^{2\ell}}
 \left[
 2k\varepsilon u^{k-1}
 -K(t)\ell \overline{u}^{\ell-1}\avintr u^p\,dx
 -(p-k) u^{p-1}\overline{u}^\ell K(t)
 \right]
 \nonumber\\
 &=
  -\frac{2 k \varepsilon u^{k-1}}{\overline{u}^{\ell}}\chi
+I.
 \label{hest1}
\end{align}
Next we show that $$I:=  2k\varepsilon u^{k-1}
 -K(t)\ell \overline{u}^{\ell-1}\avintr u^p\,dx
 -K(t)(p-k) u^{p-1}\overline{u}^\ell\leq0.$$
Indeed,  using  \eqref{mt1} in conjunction with Jensen's inequality and \eqref{LpEst}, \eqref{zUB} we immediately derive
\bea\label{mt2}
K(t)&&\geq 1-\sigma\left(\avintr u^p dx\right)^{\beta/p}\no\\
&& \geq 1-\sigma2^{\beta(\mu+1)/p} A_1^{\beta/p} \Lambda^{\beta\mu/p}\no\\
&&\geq 1-\sigma2^{\beta(\mu+1)/p} a_1^{\beta/p} \Lambda_1^{\beta\mu/p} \lambda^{\beta}:=D,\quad\mbox{for any}\quad 0<t<\min\{t_0,t_1\},
\eea
recalling that  $0<\Lambda_1=\sup_{0<\delta<1} \bar{\phi_{\de}}<\infty.$
Notably we have
\bge\label{kl5}
D=  1-\sigma2^{\beta(\mu+1)/p} a_1^{\beta/p} \Lambda_1^{\beta\mu/p} \lambda^{\beta}=\lambda^{-1} d,
\ege
 is a positive  constant for  $0<\lambda<\lambda_0\ll1,$  depending on $u_0$ but not on $0<\delta<\delta_0;$  here we  recall  that  $d$  is defined by \eqref{kl4}.

Next combining \eqref{hest1} with \eqref{LpEst}, \eqref{zUB} and \eqref{mt2} we deduce
\begin{align}
I& \leq 2k\varepsilon u^{k-1}
 -\frac{A_2 D\ell }{2} \overline{u}^{\mu+\ell-1}
 -D(p-k) u^{p-1}\overline{u}^\ell\nonumber\\
& \leq 2k\varepsilon u^{k-1}
 -D(p-k) (2 \Lambda)^{\ell} u^{p-1}, \quad\mbox{for any}\quad 0<t<t_0.
\label{I1}
\end{align}
Then \eqref{I1} in conjunction with Young's inequality  leads to $I\leq 0,$ by also choosing $\vep$ sufficiently small and independent of $0<\de<\de_0.$

Finally combining \eqref{ps1} with \eqref{hest1} we obtain
\bean
\mathcal{H}[\chi]\leq -\frac{2 k \vep u^{k-1}}{\ol{u}^{\ell}}\chi\quad\mbox{in}\quad \left(0,\frac{3}{4}\right)\times(0,t_2),
\eean
for any  $\vep$ sufficiently small and independent of $0<\de<\de_0.$

Note that  $\chi(0,t)=0,$ whilst at $r=\frac{3}{4}$ due to Lemma \ref{NewProblProp} and \eqref{zUB} there holds
\begin{align}
\chi\left(\frac{3}{4}\right)
&=
\left(\frac34\right)^{N-1}u_r\left(\frac34,t\right)
+
\varepsilon\left(\frac34\right)^{N}\frac{u^k\left(\frac{3}{4},t\right)}{\overline{u}^\ell}
\nonumber\\
&\leq
-c\left(\frac34\right)^{N-1}
+
\varepsilon\left(\frac34\right)^{N-kN}\overline{u}^{k-\ell}(t)
\nonumber\\
&\leq
-c\left(\frac{3}{4}\right)^{N-1}
+\varepsilon\left(\frac{3}{4}\right)^{N-kN}(2\Lambda)^{k-\ell}
<0,
\end{align}
for $\varepsilon$ sufficiently small.

Subsequently for $t=0,\; 0<r<\delta$ and fixed $0<\lambda\ll 1$ we calculate,
\begin{align*}
\chi(r,0)
&=r^{N-1}\left[\lambda \phi'_{\de}(r)+\varepsilon r\lambda^{k-\ell}\frac{\phi_{\de}^k}{\overline{\phi}_{\de}^{\ell}}\right]\\
&\leq
 r^{N}\left[-\frac{\lambda a }{\de^{a+2}}
 +\varepsilon\lambda^{k-\ell}
 \frac{(1+\frac{a}{2})^k}{\de^{ak}\left(\frac{N}{N-a\beta}
 +O\left(\de^{N-a\beta}\right)\right)}\right]\\
&\leq r^{N}\left[-\frac{\lambda a }{\de^{a+2}}+\varepsilon\lambda^{k-\ell}\frac{(1+\frac{a}{2})^k}{\de^{ak}}\right]<0,
\end{align*}
since $a+2=ap>ak$ and for $\varepsilon$ small enough and independent of $0<\de<\de_0.$

On the other hand, for $t=0,\ \delta<r<\frac{3}{4}$ and fixed $0<\lambda \ll 1$ we have,
\begin{align*}
\chi(r,0)&=r^{N-1}\left[-\frac{\lambda\,a}{r^{a+1}} +\varepsilon r\lambda^{k-\ell}\frac{1}{r^{ak}\left(\frac{N}{N-a\beta}+O\left(\de^{N-a\beta}\right)\right)}\right]\\
&\leq r^{N-1}\left[-\frac{\lambda\,a}{r^{a+1}} +\varepsilon \lambda^{k-\ell}\frac{1}{r^{ak-1}}\right]<0,
\end{align*}
since $a+1>ak-1$ and again taking $\varepsilon$ small enough and still independent of $0<\de<\de_0.$

Outlining  we have
\bean
&&\mathcal{H}[\chi]\leq -\frac{2 k \varepsilon u^{k-1}}{\overline{u}^{\ell}}\chi,
\quad\mbox{for}\quad 0<r<\frac{3}{4}\quad\mbox{and}\quad 0<t<t_2,
\\
&&\chi\leq 0,\quad\mbox{for}\quad r=0,\frac{3}{4}\quad\mbox{and}\quad  0<t<t_2,
\\
 &&\chi\leq 0,\quad\mbox{for}\quad 0<r<\frac{3}{4}\quad\mbox{and}\quad  t=0,
\eean
hence maximum principle entails  $\chi\leq 0,$ that is
$$
u_r\leq \varepsilon r\frac{u^k}{\overline{u}^{\ell}},
$$
which by integrating over $r\in(0,\frac34)$  leads to
\bea
\label{tbsd10}
u(r,t)\leq\left[\frac{2 \overline{u}^{\ell}}{\varepsilon (k-1)}\right]^{\frac{1}{k-1}} r^{-\frac{2}{k-1}}\quad\mbox{for}\quad 0<r<\frac{3}{4}\quad\mbox{and}\quad 0<t<t_2.
\eea
Hence \eqref{tbsd10} implies that for any $0<r<\frac{3}{4}$
$$
	\frac{1}{|B_1|}\int_{B_r(0)} u^p\,dx\leq N\left[\frac{2}{\varepsilon (k-1)}\right]^{\frac{p}{k-1}}\frac{r^{N-\frac{2p}{k-1}}}{N-\frac{2p}{k-1}}\overline{u}^{\mu},\quad 0<t<t_2,
$$
and by choosing $r$ sufficiently small, recalling that $N>\frac{2p}{k-1},$ we end up with the following estimate
\be\label{tbsd11}
\frac{1}{|B_1|}\int_{B_r(0)} u^p\,dx\leq \frac{A_2}{8} \overline{u}^{\mu},
\ee
for all $0<t<t_2$ since $\mu=\frac{p\,\ell}{k-1}.$

Next we set
$
\psi:=\frac{u}{\overline{u}^{\nu}},
$
for $\nu:=\frac{\mu}{p}=\frac{\ell}{k-1}>1$, then we can easily check that $\psi$ satisfies the following non-local equation
$$
\psi_t
=
\Delta \psi
+\left(
\frac{K(t)}{\overline{u}^{\nu}}u^p
-
\nu\frac{K(t)u}{\overline{u}^{\nu+1}} \avintr u^p\;dx
\right).
$$
We easily observe that by virtue of Lemma \ref{NewProblProp}  and relations \eqref{as2}, \eqref{LpEst}  and \eqref{zUB} the terms
\bean
\frac{K(t)}{\overline{u}^{\nu}}u^p,\quad\mbox{and}\quad \nu\frac{K(t)u}{\overline{u}^{\nu+1}} \avintr u^p\;dx,
\eean
are uniformly bounded in $[B_1(0)\setminus B_r(0)]\times \left(0,\min\{t_0(\de),t_1\}\right).$

Then standard parabolic regularity theory, \cite{LSU}, guarantees the existence of a time $t_3>0$ independent of $0<\de<\de_0$ such that
\bea\label{tbsd12}
\left|\frac{1}{|B_1|}\int_{B_1\setminus B_r(0)}\frac{u^p}{\ol{u}^\mu}\,dx-\frac{1}{|B_1|}\int_{B_1\setminus B_r(0)}\frac{u_0^p}{\ol{u}_0^\mu}\,dx\right|<\frac{A_2}{8},
\eea
for $0\leq t\leq \min\{t_0(\de), t_2,t_3\}.$

Considering that for some $\de_1\in(0,\de_0)$ there holds that $t_0(\de_1)\leq\min\{t_2,t_3, T_{\de_1}\}$ and then by virtue of \eqref{tbsd11} and \eqref{tbsd12} we deduce
\bean
&&\left|\frac{1}{|B_1|}\int_{B_1}\frac{u^p}{\ol{u}^\mu}\,dx-\frac{1}{|B_1|}\int_{B_1}\frac{u_0^p}{\ol{u}_0^\mu}\,dx\right|\\
&&\leq \left|\frac{1}{|B_1|}\int_{B_r(0)}\frac{u^p}{\ol{u}^\mu}\,dx-\frac{1}{|B_1|}\int_{B_r(0)}\frac{u_0^p}{\ol{u}_0^\mu}\,dx\right|\\
&&+\left|\frac{1}{|B_1|}\int_{B_1\setminus B_r(0)}\frac{u^p}{\ol{u}^\mu}\,dx-\frac{1}{|B_1|}\int_{B_1\setminus B_r(0)}\frac{u_0^p}{\ol{u}_0^\mu}\,dx\right|\\
&&\leq \frac{3 A_2}{8},
\eean
and thus
\bea\label{tbsd13}
\frac{11 A_1}{8}\leq \frac{1}{|B_1|}\int_{B_1} \frac{u^p}{\ol{u}^{\mu}}\,dx\leq \frac{5 A_2}{8}\quad\mbox{for any}\quad 0<t<\min\{t_1,t_0(\de_1)\}.
\eea
We can then use continuity arguments in conjunction with \eqref{tbsd13} and the fact that  $0<t_0(\de_1)<T_{\de_1}$ to extend the validity of \eqref{LpEst} beyond $t_0(\de_1),$ which actually contradicts the definition of $t_0(\de_1).$

Eventually we obtain that \eqref{LpEst} as well as all the preceding estimations are valid for any $0<t<\min\{\widetilde{t}_0,T_{\de}\}$ for $\widetilde{t}_0=\min\{t_2,t_3\}.$ This competes the proof of the proposition.
\end{proof}
We now are ready to prove Theorem \ref{thmbu}, the main result in the current subsection.

\begin{proof}[Proof of Theorem \ref{thmbu}]
By virtue of  the  key estimate \eqref{mt2}, derived in the proof of Proposition \ref{prop8.1},  we can easily check that $u$ satisfies
\bean
u_t=  \Delta_r u +K(t)u^p\geq \Delta_r u + D u^p\quad\mbox{in}\quad B_1\times\left(0,\min\{t_0,T_{\de}\}\right),
\eean
reacalling  that $D$ depends on $u_0$ but not on $0<\de<\de_0.$ Thus by comparison principle (in terms of the heat operator) we infer
\bea \label{nik}
u(x,t)\geq \tilde{u}(x,t)\quad\mbox{in}\quad\bar{B}_1\times\left[0,\min\{t_0,T_{\de}\}\right], 
\eea
where $\tilde{u}$ solves the following local problem  problem
\bea
&&\tilde{u}_t= \Delta_r \tilde{u} + D \tilde{u}^p\quad\mbox{in}\quad B_1\times\left(0,\min\{t_0,T_{\de}\}\right),\label{lcp1}\\
&&\displaystyle\frac{\partial \tilde{u}}{\partial \nu}=0,\quad\mbox{on}\quad \partial B_1\times \left(0,\min\{t_0,T_{\de}\}\right),\label{lcp2} \\
&&\tilde{u}(x,0)=u_0(x), \quad\mbox{in}\quad B_1.\label{lcp3}
\eea
Consider now the auxiliary function $h:=\tilde{u}_t-\tilde{u}^p,$ then by straightforward calculations we deduce
\bean
h_t=\Delta_r h+p(p-1) \tilde{u}^{p-2} |\nabla \tilde{u}|^2+ D p \tilde{u}^{p-1}\,h\geq \Delta_r h+ D p \tilde{u}^{p-1}\,h \quad\mbox{in}\quad B_1\times\left(0,\min\{t_0,T_{\de}\}\right)
\eean
for $p>1$ and $\displaystyle{\frac{\partial h}{\partial \nu}}=0$ on $\partial{B}_1\times\left(0,\min\{t_0,T_{\de}\}\right).$
Additionally, by virtue of \eqref{supercritical} and \eqref{kl5}, we have 
\bean
h(x,0)=\Delta_r \tilde{u}(x,0)+D \tilde{u}^p(x,0)-\tilde{u}^p(x,0)=\Delta_r u_0+(D-1) u_0^p\geq u_0^p,\quad\mbox{in}\quad B_1.
\eean
Therefore maximum principle entails that $h>0$ in $\bar{B_1}\times\left[0,\min\{t_0,T_{\de}\}\right]$ and that is
\bean
\tilde{u}_t>\tilde{u}^p\quad\mbox{in}\quad \bar{B_1}\times\left[0,\min\{t_0,T_{\de}\}\right].
\eean
Integrating we derive
\bean
\tilde{u}(r,t)\geq\left(\frac{1}{u_0^{p-1}(r)}-(p-1)t\right)^{-\frac{1}{p-1}},\quad\mbox{in}\quad \bar{B_1}\times\left[0,\min\{t_0,T_{\de}\}\right],
\eean
which for $r=0$ reads
\bean
\tilde{u}(0,t)\geq\left(\frac{1}{u_0^{p-1}(0)}-(p-1)t\right)^{-\frac{1}{p-1}}=\left\{\frac{\de^{a(p-1)}}{\left[\la\left(1+\frac{a}{2}\right)\right]}-(p-1)t\right\}^{-\frac{1}{p-1}},
\eean
which entails finite time blow-up for $\widetilde{u},$ i.e.
\bean
||\tilde{u}(\cdot,t)||_{\infty}=\widetilde{u}(0,t)\to \infty\quad\mbox{as}\quad  t\to \tilde{T}_\de=\frac{1}{p-1}\left[\la \left(1+\frac{a}{2}\right)\right]^{1-p}\,\de^2,
\eean
and consequently finite-time blow-up for the solution $u$ of \eqref{fkpp2a}-\eqref{id2} at time $T_{\de}\leq \widetilde{T}_\de$ due to \eqref{nik}. Note also that $T_{\de}\to 0$ as $\de \to 0$ and thus the proof is complete.
\end{proof}
\begin{rem}
The finite-time blow-up established by Theorem \ref{thmbu} is actually a single-point blow-up, i.e. the solution $u(r,t)$ of  of \eqref{fkpp2a}-\eqref{id2} blows up only at the origin $r=0.$ Indeed, by virtue of \eqref{LpEst} and \eqref{zUB} we derive the following estimate
\bean
\avintr u(x,t)\,dx=N\int_0^{1} \,r^{N-1}u(r,t)\,dr\leq C<\infty,\quad\mbox{for any}\quad 0<t<T_\de,
\eean
wich in conjunction with \eqref{as0} implies that the blow-up set of $u$
\[
\mathcal{S}=\left\{r_0\in[0,1]:\quad\mbox{there exists}\quad r_n\to r_0\quad\mbox{and}\quad t_n\to T_{\de}: \lim_{n\to +\infty} u(r_n,t_n)=+\infty \right\}=\{0\}.
\]
\end{rem}
\subsection{Complete blow-up}
Interestingly the finite-time blow-up predicted by Theorem \ref{thmbu} for the solution $u$ of \eqref{fkpp2a}-\eqref{id2} is complete, roughly speaking there holds $u(x,t)=+\infty$ for any $x\in B_1$ and $t>T_{\de}.$
Before proving the latter result we need to provide an auxiliary result inspired by \cite{BC}, cf. \cite[Theorem 27.2]{QS}, and for which we will need some preliminary concepts.

Now set $f_k(V):=\min\{V^p,k\}, V\geq 0, k=1,2,\dots.$ and let $\tilde{u}_k$ be the solution of problem
\bean
&& V_t=\Delta_r V+f_k(V),\quad\mbox{in}\quad B_1\times(0,\infty),\\
&& \displaystyle\frac{\partial V}{\partial \nu}=0,\quad\mbox{on}\quad \partial B_1\times (0,\infty),\\
&& V(x,0)=u_0(x),\quad\mbox{in}\quad B_1.
\eean
It is easily seen that $\tilde{u}_k$ is globally defined and $\tilde{u}_{k+1}\geq \tilde{u}_k.$ Moreover $\tilde{u}_k$ solves the integral equation
\bea\label{ps50}
\tilde{u}_k(x,t)=\int_{B_1} G(x,y,t) u_0(x)\,dy+\int_0^t \int_{B_1} G(x,y,t-s) f_k(\tilde{u}_k(y,s))\,dy\,ds,
\eea
for any $x\in B_1,\; t>0$, where $G$ stands for the Neuman heat kernel in $B_1.$ Now since $G>0$ and $\tilde{u}_{k+1}\geq \tilde{u}_k$ if we pass to the limit into \eqref{ps50} we derivethen monotone convergence theorem implies
\bean
\bar{u}(x,t)=\int_{B_1} G(x,y,t) u_0(x)\,dy+\int_0^t \int_{B_1} G(x,y,t-s)\bar{u}^p(y,s))\,dy\,ds,\quad x\in B_1,\;t>0,
\eean
for $\bar{u}(x,t):=\lim_{k\to\infty}\tilde{u}_k(x,t)$ and where the double integral might be infinite. Clearly $\bar{u}(\cdot,t)=u(\cdot,t)$ for $t<\widetilde{T}_{\de}$ and if we set
\bean
T^{c}=T^{c}(u_0):=\inf\left\{t\geq \widetilde{T}_{\de}:\bar{u}(x,t)=\infty\quad\mbox{for all}\quad x\in B_1\right\},
\eean
then there holds $T^{c}(u_0)\geq \widetilde{T}_{\de}.$ Now we can provide a more rigorous definition of the complete blow-up.
\begin{definition}
We say that the solution of problem \eqref{lcp1}-\eqref{lcp3} blows up completely if $T^{c}(u_0)=\widetilde{T}_{\de}.$
\end{definition}

\begin{theorem}\label{cobu}
If $N\geq 3$ and $1<p<p_S:=\frac{N+2}{N-2}$  then solution of problem \eqref{lcp1}-\eqref{lcp3} exhibits a complete blow-up at $T_{\de}.$
\end{theorem}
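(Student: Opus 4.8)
The plan is to follow the scheme of \cite{BC}, in the streamlined form of \cite[Theorem 27.2]{QS}, working with the minimal continuation $\bar{u}=\lim_{k\to\infty}\tilde{u}_k$ already constructed above. Since \eqref{lcp1}--\eqref{lcp3} is a genuinely \emph{local} problem, the constant $D$ may first be normalised away by the rescaling $\tilde{u}\mapsto D^{1/(p-1)}\tilde{u}$, reducing the equation to $V_t=\Delta_r V+V^p$, which is exactly the setting of the truncated problems defining $\bar{u}$. By construction $\bar{u}$ is the increasing limit of the globally defined $\tilde{u}_k$, it satisfies the integral equation \eqref{ps50} in the limit, coincides with $\tilde{u}$ on $[0,\widetilde{T}_{\de})$, and obeys $T^c\ge\widetilde{T}_{\de}$. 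Thus the whole statement reduces to showing that $\bar{u}$ admits \emph{no} finite continuation past $\widetilde{T}_{\de}$, that is, $\bar{u}(\cdot,t)\equiv+\infty$ on $B_1$ for every $t>\widetilde{T}_{\de}$.

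First I would establish a spreading dichotomy. Because the Neumann heat kernel $G$ is strictly positive and the $\tilde{u}_k$ increase, the Duhamel representation shows that the set $\{x\in B_1:\bar{u}(x,t)=+\infty\}$ can only grow in $t$: if it has positive measure at one instant $t_0>\widetilde{T}_{\de}$, then the source term $\int_{t_0}^t\!\int_{B_1}G(x,y,t-s)\,\bar{u}^p(y,s)\,dy\,ds$ is $+\infty$ for every $x$ and every $t>t_0$, so $\bar{u}(\cdot,t)\equiv+\infty$ there. It therefore suffices to exclude that $\bar{u}$ stays locally integrable on an interval $(\widetilde{T}_{\de},\widetilde{T}_{\de}+\tau)$; equivalently one argues by contradiction, assuming such a finite continuation exists.

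The decisive input — the only place where $1<p<p_S$ is genuinely used — is the sharp description of the blow-up near $\widetilde{T}_{\de}$. For $p<p_S$ the local comparison problem has a type~I rate $\Vert\tilde{u}(\cdot,t)\Vert_\infty\approx(\widetilde{T}_{\de}-t)^{-1/(p-1)}$, by the classical Giga--Kohn theory (equivalent to the parabolic Liouville theorem valid precisely in this range; compare Theorem \ref{tbu}). Combined with the single-point character of the blow-up recorded in the preceding Remark, this forces the final profile to be at least critically singular at the origin,
\[
\bar{u}(x,\widetilde{T}_{\de}^{-})\ \gtrsim\ |x|^{-2/(p-1)},\qquad 0<|x|\ll1.
\]
For $1<p\le N/(N-2)$ the source $\bar{u}^p\sim|x|^{-2p/(p-1)}$ already fails to be locally integrable, so the Duhamel term in \eqref{ps50} is $+\infty$ at once. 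In the complementary range $N/(N-2)<p<p_S$ — which is in fact the range where Theorem \ref{thmbu} produces blow-up — one instead compares $\bar{u}$ with the singular steady state $U_*(r)=L\,r^{-2/(p-1)}$ of $\Delta_r U+U^p=0$ and uses its instability from above: a continuation dominating $U_*$ near the origin cannot relax below it and is driven to become infinite throughout $B_1$. Either way $\bar{u}\equiv+\infty$ for $t>\widetilde{T}_{\de}$, and with the dichotomy this yields $T^c=\widetilde{T}_{\de}$, i.e.\ complete blow-up.

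I expect the main obstacle to be this third step: quantifying the lower bound on the final profile and turning it into non-continuability, \emph{uniformly} in the truncation parameter $k$. The two delicate points are (a) invoking the subcritical Giga--Kohn/Liouville analysis in exactly the form that pins down the critical singularity of $\bar{u}(\cdot,\widetilde{T}_{\de}^-)$ and the instability of $U_*$ — this is what consumes $p<p_S$ and genuinely fails in the supercritical range, where peaking (incomplete–blow-up) solutions are known to exist; and (b) checking that the purely local character of \eqref{lcp1}, after the normalisation of $D$, makes $\tilde{u}$ amenable to this classical theory, so that the non-local origin of $D$ is only cosmetic.
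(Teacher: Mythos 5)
Your route is genuinely different from the paper's, and in the only parameter range where the theorem is actually invoked it has a gap that I do not see how to close. The paper never touches the final blow-up profile or the singular steady state. Its mechanism is: (Step 1) $\tilde{u}_t\geq 0$ by the maximum principle, using \eqref{supercritical}; (Step 2) $\|\tilde{u}^p(\cdot,t)\|_1\to\infty$ as $t\to\widetilde{T}_{\de}^-$, proved by contradiction from the $L^1$--$L^k$ smoothing estimate \eqref{eqn:21} against the $L^k$ blow-up results of \cite{BP,W} (this is where $p<p_S$ is spent); (Step 3) the monotonicity of $\tilde u$ in $r$ forces a fixed fraction of that diverging mass onto the annulus $\vep<r<1-\vep$; (Step 4) the strict positivity of the Neumann kernel on that annulus, together with time-monotonicity of the truncations $\tilde u_k$, pushes $+\infty$ to every point of $B_1$ for every $t>\widetilde{T}_{\de}$ via \eqref{ps50}. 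Note in particular that the quantity driven to infinity is the \emph{global} $L^1$ norm of $\tilde u^p$ as $t\uparrow\widetilde{T}_{\de}$, not the integrability of a pointwise final profile near the origin.

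The gap in your argument is the third step. The lower bound $\bar u(x,\widetilde{T}_{\de}^-)\gtrsim |x|^{-2/(p-1)}$ (even granting the type~I/no-oscillation machinery needed to prove it) yields $\bar u^p\gtrsim|x|^{-2p/(p-1)}$, which is non-integrable only for $p\leq N/(N-2)$ --- precisely the range excluded by Theorem \ref{thmbu} and by Corollary \ref{nik2}. So in the relevant range $N/(N-2)<p<p_S$ your ``easy case'' is vacuous, and everything rests on the assertion that a continuation dominating $U_*(r)=Lr^{-2/(p-1)}$ near the origin ``cannot relax below it and is driven to become infinite throughout $B_1$.'' As written this is the conclusion restated, not an argument: you give no mechanism for why domination of $U_*$ at one time persists or propagates spatially, you do not check that the Giga--Kohn constant in your profile bound actually exceeds the specific constant $L=L(N,p)$ of the singular steady state, and comparison principles are delicate for data that are merely $\geq U_*$ pointwise at a single time. (The genuine theorem in this neighbourhood --- that for $p<p_S$ any $L^1$ weak continuation with $\bar u^p\in L^1_{loc}$ is locally bounded, whence no incomplete blow-up --- is the content of \cite{BC} and \cite[Theorem 27.2]{QS}, but it is proved by regularity/bootstrap arguments, not by instability of $U_*$, and you would need to import it wholesale.) A secondary omission: your ``spreading dichotomy'' needs time-monotonicity of the $\tilde u_k$ (the paper's Step 1 and the inequality $f_k(\tilde u_k(R,s))\geq f_k(\tilde u_k(R,\widetilde T_\de))$ in Step 4), which you never establish; without it, an infinity set of positive measure at a single instant $t_0$ is a null set in the time integral of \eqref{ps50} and forces nothing.
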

\begin{proof}
For reader's convenience we split the  proof in several steps.\\
{\bf Step 1:} We claim that $\widetilde{u}_t\geq 0.$ Indeed, if we set $z=\widetilde{u}_t$ then $z,$ thanks to \eqref{supercritical} satisfies
\bean
&& z_t=\Delta_r z+D p \tilde{u}^{p-1} z,\quad\mbox{in}\quad B_1\times\left(0,\widetilde{T}_{\de}\right),\\
&& \displaystyle\frac{\partial z}{\partial \nu}=0,\quad\mbox{on}\quad \partial B_1\times \left(0,\widetilde{T}_{\de}\right),\\
&& z(x,0)=\widetilde{u}_t(x,0)=\Delta_r u_0(x)+D u_0^p(x)\geq 2u_0(x)\geq 0,\quad\mbox{in}\quad B_1,
\eean
and thus maximum principle verifies our claim.\\
{\bf Step 2:} In the current step we will prove that $||\widetilde{u}^p(\cdot,t)||_1\to +\infty$ as $t\to \widetilde{T}_{\de}-.$

Note that since $\widetilde{u}\geq 0$ and $\widetilde{u}_t\geq 0$ then the function $g: t\mapsto||\widetilde{u}^p(\cdot,t)||_1$ is nondecreasing.

Assume by contrary that $g$ is bounded then the $L^{k}-L^{\ell}-$estimates entail
\bea
\left\Vert e^{-tA}f \right\Vert_k\leq Cq(t)^{-\frac{N}{2}(\frac{1}{\ell}-\frac{1}{k})} e^{-\mu_2 t} \left\Vert f \right\Vert_\ell, \quad 1\leq \ell \leq k \leq \infty,
 \label{eqn:21}
\eea
for $t\geq 0$ and any $f\in L^{\ell}(\Om),$ where
\[ 0<q(t)=\min\{t,1\}\leq 1, \]
and the operator $A$  in \eqref{eqn:21} denotes $-\Delta$ provided with Neumann boundary condition, whilst  $\mu_2$ is  second eigenvalue of $A,$ see also \cite{henry, rothe}.

Now by virtue of the variation-of-parameters formula we deduce
\begin{eqnarray}
 \Vert \widetilde{u}(t)\Vert_k\leq C\Big( \Vert u_0\Vert_k +\int_{0}^te^{-\mu_2s} q(s)^{-\frac{N}{2}(\frac{1}{\ell}-\frac{1}{k})} ||\widetilde{u}^p(s)||_\ell \,ds\Big), \quad\mbox{for any}\quad 0<t<\widetilde{T}_\de,
 \label{eqn:27}
\end{eqnarray}
where integrability near $s=t$ of the integrand terms  appeared in \eqref{eqn:27}  is ensured under the condition
\bge\label{eqn:28}
\frac{N}{2}\Big(\frac{1}{\ell}-\frac{1}{k}\Big)<1.
\ege
Now for $\ell=1$ \eqref{eqn:27} in conjunction with our assumption gives
\begin{eqnarray}
 \Vert \widetilde{u}(t)\Vert_k\leq C\Big( \Vert u_0\Vert_k +\int_{0}^te^{-\mu_2s} q(s)^{-\frac{N}{2}(\frac{1}{\ell}-\frac{1}{k})} ||\widetilde{u}^p(s)||_1 \,ds\Big)\leq C(\widetilde{T}_\de),\;\mbox{for any}\; 0<t<\widetilde{T}_\de,\quad
 \label{eqn:27a}
\end{eqnarray}
provided that
\bge\label{ps20}
\frac{N}{2}\Big(1-\frac{1}{k}\Big)<1.
\ege
It is known, see \cite{BP, W}, that  for $N\geq 3$ and $k>\frac{N(p-1)}{2}$ the $L^{k}-$norm,of the solution of
\bgee
&&\xi_t = \Delta_r \xi + D \xi^p\quad\mbox{in}\quad B_1\times\left(0,\min\{t_0,T_{\de}\}\right),\\
&&\xi=0,\quad\mbox{on}\quad \partial B_1\times \left(0,\min\{t_0,T_{\de}\}\right), \\
&&\xi(x,0)=u_0(x), \quad\mbox{in}\quad B_1,
\egee
blows up in finite time, and thus by comparison arguments we also derive that
\bge
\Vert\widetilde{u}(t)\Vert_k\to +\infty\quad\mbox{as}\quad t\to \widetilde{T}_\de,\quad\mbox{for any}\quad k>\frac{N(p-1)}{2}.\label{ps10}
\ege
Since $1<p<p_S$ we can always find an exponent $k$ so that both \eqref{ps20} and \eqref{ps10} hold true, and thus we arrive at a contradiction due to \eqref{eqn:27a}.\\
{\bf Step 3:} Consider $\vep\in (0,1),$ then
\bean
\int_0^1 r^{N-1} \tilde{u}^p(r,t)\,dr&&=\int_0^{\vep} r^{N-1} \tilde{u}^p(r,t)\,dr+\int_\vep^{1-\vep} r^{N-1} \tilde{u}^p(r,t)\,dr+\int_{1-\vep}^1 r^{N-1} \tilde{u}^p(r,t)\,dr\\
&&:=I_1(\vep)+I_2(\vep)+I_3(\vep).
\eean
For $I_1(\vep)$ under the change of variable $r=\frac{\vep(R-\vep)}{1-2\vep}$ we derive
\bean
I_1(\vep)=\frac{\vep^N}{(1-2\vep)^N} \int_{\vep}^{1-\vep} (R-\vep)^{N-1} \tilde{u}^p(R,t)\, dR\leq \frac{\vep^N}{(1-2\vep)^N} I_2(\vep).
\eean
An estimate for $I_3(\vep)$ is obtained as follows
\bean
I_3(\vep)\leq \tilde{u}^p(1,t)\left(\frac{1-\left(1-\vep\right)^N}{N}\right)\leq \tilde{u}^p(1-\vep,t)\left(\frac{\left(1-\vep\right)^N-\vep^N}{N}\right)\leq I_2(\vep),
\eean
provided that $\vep$ is chosen small enough so that $1+\vep^N<2(1-\vep)^N, $ where also the fact that $\tilde{u}_r\leq 0$ for $r\in (0,1)$ has been taken into account. Consequently
\bea\label{ps30}
\int_{\vep}^{1-\vep}r^{N-1}\tilde{u}^p(r,t)\,dr\geq C(\vep) \int_{0}^{1}r^{N-1}\tilde{u}^p(r,t)\,dr, 
\eea
for $C(\vep):=2+\frac{\vep^N}{(1-2\vep)^N}.$

Set $v(r):=\lim_{t\to \tilde{T}_\de} \tilde{u}(r,t)\;\mbox{for any}\; r\in (0,1)$ then
\bea\label{ps40}
\int_{\vep}^{1-\vep} r^{N-1}v^p(r)\,dr=\lim_{t\to \tilde{T}_\de} \int_{\vep}^{1-\vep}r^{N-1}\tilde{u}^p(r,t)\,dr\geq \liminf_{t\to \tilde{T}_\de} C(\vep) \int_0^1 r^{N-1}\tilde{u}^p(r,t)\,dr=\infty,\qquad
\eea
where it has been successively used the monotone convergence of $\tilde{u}$ towards $v,$ relation \eqref{ps30}  and  Step 2. \\
{\bf Step 4:} Fix now some $r\in (0,1)$ and take some $t>\widetilde{T}_{\de}.$ Then we can find $\vep>0$ sufficiently small such that $t-\widetilde{T}_{\de}\geq 2\vep$ and $\vep<r<1-\vep.$ Next  by virtue of \eqref{ps50} and in conjunction with $f_k(\tilde{u}_k(R,s))\geq f_k(\tilde{u}_k(R,\widetilde{T}_\de))$ for $s\geq \widetilde{T}_\de$ we have
\bea\label{ps70}
\tilde{u}_k(r,t) &&\geq N \omega_N\int_0^t \int_{0}^1 R^{N-1} G(r,R,t-s) f_k(\tilde{u}_k(R,s))\,dR\,ds\no\\
&&\geq N \omega_N \tilde{C}(\vep)\int_{t-2\vep}^{t-\vep} \int_{\vep}^{1-\vep} R^{N-1}  f_k(\tilde{u}_k(R,s))\,dR\,ds\no\\
&&\geq N \omega_N\tilde{C}(\vep)\int_{t-2\vep}^{t-\vep} \int_{\vep}^{1-\vep} R^{N-1}  f_k(\tilde{u}_k(R,\widetilde{T}_\de))\,dR\,ds\no\\
&&\geq N \omega_N\vep\;\tilde{C}(\vep)\int_{\vep}^{1-\vep} R^{N-1}  f_k(\tilde{u}_k(R,\widetilde{T}_\de))\,dR,
\eea
where
\[
\tilde{C}(\vep):=\inf\left\{G(r,R,s): \vep<r,R<1-\vep,\; s\in(\vep, 2\vep)\right\}>0.
\]
Passing to the limit as $k\to \infty$ into \eqref{ps70} then due to \eqref{ps40} we deduce
\bean
\bar{u}(x,t)&&\geq N \omega_N\vep\;\tilde{C}(\vep)\lim_{k\to \infty}\int_{\vep}^{1-\vep} R^{N-1}  f_k(\tilde{u}_k(R,\widetilde{T}_\de))\,dR\\
&&\geq N \omega_N\vep\;\tilde{C}(\vep) \int_{\vep}^{1-\vep} R^{N-1}  v^p(R)\,dR=\infty,
\eean
which proves the assertion.
\end{proof}

\begin{corollary}\label{nik2}
Let $N\geq 3$ with  $\frac{N}{N-2}<p<p_S.$
Then the solution $u$ of \eqref{fkpp2a}-\eqref{id2} blows up completely.
\end{corollary}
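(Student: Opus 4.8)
The plan is to read off the complete blow-up of the non-local solution $u$ from two facts already established: the pointwise comparison \eqref{nik} generated in the course of proving Theorem \ref{thmbu}, and the complete blow-up of the companion local problem \eqref{lcp1}-\eqref{lcp3} proved in Theorem \ref{cobu}. The first thing I would check is that the hypothesis $\frac{N}{N-2}<p<p_S$ is exactly the intersection of the hypotheses of these two results: the lower bound $p>\frac{N}{N-2}$, together with \eqref{mt1}, places us in the regime of Theorem \ref{thmbu}, so that $u$ blows up at a finite time $T_\de<\infty$ and satisfies $u\geq\tilde u$; while the upper bound $p<p_S=\frac{N+2}{N-2}$ is precisely the Sobolev-subcritical restriction under which Theorem \ref{cobu} guarantees complete blow-up of $\tilde u$. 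Since $\frac{N}{N-2}<p_S$, this interval is nonempty, so the statement is not vacuous.

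Next I would recall from the proof of Theorem \ref{thmbu} that, on the time interval on which the sharp estimate \eqref{mt2} holds (which for the spiky data covers all of $[0,T_\de)$ once $\de$ is small, since $T_\de\to0$ as $\de\downarrow0$), the non-local coefficient obeys $K(t)\geq D>0$. Consequently $u$ is a supersolution of the local problem \eqref{lcp1}-\eqref{lcp3}, and the comparison principle for the heat operator yields \eqref{nik}, namely $u\geq\tilde u$ on $\bar B_1\times[0,\min\{t_0,T_\de\})$. I would then invoke Theorem \ref{cobu} for $\tilde u$: with the truncations $f_k(V)=\min\{V^p,k\}$ and the increasing family $\tilde u_k\uparrow\bar{\tilde u}$ solving the integral equation \eqref{ps50}, the minimal continuation satisfies $\bar{\tilde u}(x,t)=+\infty$ for every $x\in B_1$ once the blow-up time of $\tilde u$ is exceeded.

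The heart of the matter is then to transfer this completeness upward, from the subsolution $\tilde u$ to $u$. I would define the continuation $\bar u$ of $u$ by the same truncation--monotone-limit procedure used before Theorem \ref{cobu}, but driven by the lower bound $u_t\geq\Delta_r u+Du^p$ rather than by the full non-local equation; since this differential inequality is inherited by the truncated problems, one obtains $\bar u\geq\bar{\tilde u}$ in $B_1$ after the blow-up time. Because $\bar{\tilde u}$ already equals $+\infty$ throughout $B_1$, this domination forces $\bar u(x,t)=+\infty$ for all $x\in B_1$ at every subsequent time, which is exactly the assertion that $u$ blows up completely.

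I expect the main obstacle to be this transfer, and for two linked reasons. First, the non-local factor $K(t)=1-\sigma\avintr u^\beta\,dx$ may turn negative as $u$ grows, so truncating the full non-local equation need not produce a monotone, blowing-up family; the remedy is to treat completeness as a \emph{lower-bound} phenomenon and to continue only the local inequality $u_t\geq\Delta_r u+Du^p$, exactly as in Steps 1--4 of the proof of Theorem \ref{cobu}, where the divergence of $\|\tilde u^p(\cdot,t)\|_{1}$ together with the strict positivity of the Neumann heat kernel $G$ in \eqref{ps50} spreads the singularity over all of $B_1$. Second, one must reconcile the blow-up time $T_\de$ of $u$ with the blow-up time $\widetilde T_\de$ of $\tilde u$: since $u\geq\tilde u$ gives only $T_\de\leq\widetilde T_\de$, the comparison alone propagates the singularity everywhere at $\widetilde T_\de$, and a little extra care---using that $u$ already ceases to exist at $T_\de$ and that near $T_\de$ the dynamics is governed by the supercritical local reaction $Du^p$---is needed to conclude that the complete blow-up in fact occurs at $T_\de$.
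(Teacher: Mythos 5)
Your proof takes essentially the same route as the paper, whose entire argument for this corollary is the one-line observation that the claim follows immediately from the comparison \eqref{nik} together with the complete blow-up of the local minorant $\tilde u$ established in Theorem \ref{cobu}. Your closing remark about reconciling the blow-up time $T_\de$ of $u$ with the (possibly later) time $\widetilde{T}_\de$ of $\tilde u$ correctly flags a subtlety that the paper's proof does not address at all, so your write-up is, if anything, more careful than the original.
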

\begin{proof}
The proof is an immediate consequence of Theorem \ref{cobu} and relation \eqref{nik}.
\end{proof}
\begin{rem}
Corollary \ref{nik2} actually means that the diffusion-driven instability stated by Theorem \ref{thmbu} is quite severe and thus any Turing (instability) pattern is destroyed once we exceed the blow-up time.
\end{rem}

\section{Blow-up rate and blow-up patterns}\label{blp}
Our aim in the current section is to determine the form the diffusion-driven blow-up ({\it DDBU}) provided  by Theorem \ref{thmbu}. We first provide some estimates of the blow-up rate for $u.$

\begin{proof}[Proof of Theorem \ref{tbu}]
We first observe that due to Lemma \ref{Lemma:mbetaestimate} and \eqref{mt2} there holds
\bea\label{nkl4}
D:=1-\sigma2^{\beta(\mu+1)/p} a_1^{\beta/p} \Lambda_1^{\beta\mu/p} \lambda^{\beta}<K(t)< 1<\infty,\quad\mbox{for any}\quad 0<t<T_{\de}.
\eea

Consider now $\Phi$ satisfying
\bean
&&\Phi_t=\Delta \Phi+\Phi^{p},\quad\mbox{in}\quad B_1\times\left(0,T_{\de}\right),\\
&&\frac{\partial \Phi}{\partial \nu}=0,\quad\mbox{on}\quad \partial B_1\times \left(0,T_{\de}\right), \\
&&\Phi(x,0)=u_0(x), \quad\mbox{in}\quad B_1,
\eean
then via comparison principle and due to \eqref{nkl4} we derive $u\leq \Phi$ in $ \bar{B}_1\times\left[0,T_{\de}\right].$

Yet it is known, see \cite[Theorem 44.6]{QS}, that
\bean
|\Phi(x,t)|\leq C_{\eta}|x|^{-\frac{2}{p-1}-\eta},\quad\mbox{in}\quad  B_1\times\left(0,T_{\de}\right)\quad\mbox{for some}\quad \eta>0,
\eean
and thus
\bea\label{tbsd18}
|u(x,t)|\leq C_{\eta}|x|^{-\frac{2}{p-1}-\eta}\quad\mbox{in}\quad  B_1\times\left(0,T_{\de}\right).
\eea
Then using standard parabolic estimates we get
\bea\label{nkl5}
u\in \mathcal{BUC^{\tau}}\left(\left\{\rho_0<|x|<1-\rho_0\right\}\times \left(\frac{T_{\de}}{2}, T_{\de}\right)\right),
\eea
for some $\tau\in(0,1)$ and each $0<\rho_0<1,$ where $\mathcal{BUC^{\tau}}(M)$ denotes the Banach space of all bounded and uniform $\tau-$H\"{o}lder continuous functions $\omega:M\subset\R^N\to \R,$ see also \cite{QS}.

Consequently \eqref{nkl5} infers that  $\lim_{t\to T_{\de}}u(x,t)$ exists and is finite for all $x\in B_1\setminus\{0\}.$
Recalling that $N>\displaystyle{\frac{2p}{p-1}}$ (or equivalently  $p>\displaystyle{\frac{N}{N-2}},\; N\geq 3$ ) then by using \eqref{nkl4},\eqref{tbsd18} and in view of dominated convergence theorem we derive
\bea\label{tbsd18a}
\lim_{t\to T_{\de}} K(t)=\gamma\in(0,+\infty).
\eea
Applying now Theorem 44.3(ii) in \cite{QS} and in conjunction with \eqref{tbsd18a} we can find a constant $C_{u}>0$ such that
\bea\label{ube}
\left|\left|u(\cdot,t)\right|\right|_{\infty}\leq C_{u}\left(T_{\de}-t\right)^{-\frac{1}{(p-1)}}\quad\mbox{in}\quad (0, T_{\de}).
\eea
On the other hand, setting $N(t):=\left|\left|u(\cdot,t)\right|\right|_{\infty}=u(0,t)$ then $N(t)$ is differentiable for almost every $t\in(0,T_{\de}),$ in view of  \cite{fmc85}, and it also satisfies
\bean
\frac{dN}{dt}\leq K(t) N^p(t).
\eean
Now since $K(t)\in C([0,T_{\de}))$ is bounded in any time interval $[0,t],\; t<T_{\de},$ and then upon integration over $(t,T_{\delta})$ we obtain
\bea\label{lbe}
\left|\left|u(\cdot,t)\right|\right|_{\infty}\geq C_l\left(T_{\de}-t\right)^{-\frac{1}{(p-1)}}\quad\mbox{in}\quad (0, T_{\de}),
\eea
for some positive constant $C_l$ and the proof is complete.
\end{proof}
\begin{rem}\label{nkl6}
Condition \eqref{ik2} implies that the diffusion-induced blow-up stated in Theorem \ref{thmbu} is of type I, i.e. the blow-up mechanism is controlled by the ODE part of \eqref{fkpp2a}.

\end{rem}
Next we  identify the blow-up  (Turing instability) pattern of the {\it DDBU solution} obtained  by Theorem \ref{thmbu}.

Note that \eqref{tbsd18} provides a rough form of the blow-up pattern for  $u.$  Nonetheless, due to \eqref{nkl4}  the non-local problem \eqref{fkpp2a}-\eqref{id2} can be tackled as the corresponding local one for which the following more accurate asymptotic blow-up profile, cf. \cite{mz98}, is available
\bea\label{kk1}
\lim_{t\to T_{max}}u(|x|,t)\sim C\left[\frac{|\log |x||}{|x|^2}\right]\quad\mbox{for}\quad |x|\ll 1.
\eea
For a more rigorous approach regarding non-local problems  the interested readers is advised to check \cite{DKZ20}. 

Relation \eqref{kk1}  provides the form of the blow-up profile of $u.$ Therefore \eqref{kk1}, in the biological context,  actually identifies the form of the developing patterns, which are induced as the result of the {\it DDI} phenomenon.
 



\end{document}